
\documentclass[a4paper,twoside,11pt]{article}
\usepackage{a4wide}
\usepackage[english]{babel} 
\usepackage{amsmath,amssymb,amsthm,amsfonts}
\usepackage{ifpdf}
\usepackage{bbm}
\usepackage{graphicx}
\usepackage{hyperref} 
\usepackage{fancyhdr}

\usepackage{enumitem}

\theoremstyle{plain}
\newtheorem{theorem}{Theorem}[section]
\newtheorem{lemma}[theorem]{Lemma}
\newtheorem{corollary}[theorem]{Corollary}
\newtheorem{proposition}[theorem]{Proposition}

\theoremstyle{definition}

\theoremstyle{remark}
\newtheorem{remark}[theorem]{Remark}
\newtheorem*{remark*}{Remark}

\numberwithin{equation}{section}


\def\N{\ensuremath{\mathbb{N}}}
\def\Q{\ensuremath{\mathbb{Q}}}
\def\R{\ensuremath{\mathbb{R}}}
\def\Z{\ensuremath{\mathbb{Z}}}

\def\ep{\varepsilon}

\def\E{\ensuremath{\mathbb{E}}}
\def\P{\ensuremath{\mathbb{P}}}
\def\F{\ensuremath{\mathcal{F}}}

\def\Ind{\ensuremath{\mathbbm{1}}}

\def\to{\rightarrow}

\def\tas{\ensuremath{\text{ as }}}

\newcommand{\dd}{\mathrm{d}}


\usepackage{color}

\newcommand{\executeiffilenewer}[3]{%
\ifnum\pdfstrcmp{\pdffilemoddate{#1}}%
{\pdffilemoddate{#2}}>0%
{\immediate\write18{#3}}\fi%
}
\newcommand{%
\ifpdf
\executeiffilenewer{.svg}{.pdf}%
{inkscape -z -D --file=.svg %
--export-pdf=.pdf --export-latex}%
\input{.pdf_tex}%
\fi
}[1]{%
\ifpdf
\executeiffilenewer{#1.svg}{#1.pdf}%
{inkscape -z -D --file=#1.svg %
--export-pdf=#1.pdf --export-latex}%
\input{#1.pdf_tex}%
\fi
}


\bibliographystyle{plain}

\pagestyle{plain}

%
%
%
%
%
%
%
%
\usepackage[normalem]{ulem}

\begin{document}

\author{\begin{tabular}{cc}
\parbox{.4\textwidth}{\centering
\textsc{Jean B\'erard} \thanks{ e-mail:  \rm \texttt{jean DOT berard AT univ-lyon1 DOT fr} }  \textsuperscript{,}\footnotemark[3] \\
\emph{Universit\'e de Strasbourg\\ 
	67400 Strasbourg, France}}
	&
\parbox{.4\textwidth}{\centering 
\textsc{Pascal Maillard} \thanks{e-mail:  \rm \texttt{pascal DOT maillard AT weizmann DOT ac DOT il} }  \textsuperscript{,}\thanks{Partially supported by ANR project MEMEMO2. PM is partially supported by a grant from the Israel Science Foundation.}\\
\emph{Weizmann Institute of Science\\
76100 Rehovot, Israel}}
\end{tabular}
\\
\\
  }

\title{The limiting process of $N$-particle branching random walk with polynomial tails}

\def\supp{\operatorname{supp}}
\def\Pc{\mathcal{P}}
\def\Rc{\mathcal{R}}
\def\X{\mathcal{X}}
\def\Y{\mathcal{Y}}
\def\logtwo{\log_2}
\def\lN{\lceil \logtwo N \rceil}

\maketitle

\begin{abstract}
We consider a system of $N$ particles on the real line  that evolves through iteration of the following steps: 1) every particle splits into two, 2) each particle jumps according to a prescribed displacement distribution supported on the positive reals and 3) only the $N$ right-most particles are retained, the others being removed from the system. This system has been introduced in the physics literature as an example of a microscopic stochastic model describing the propagation of a front. Its behavior for large $N$ is now well understood -- both from a physical and mathematical viewpoint -- in the case where the displacement distribution admits exponential moments. Here, we consider the case of displacements with regularly varying tails, where the relevant space and time scales are markedly different.
We characterize the behavior of the system for two distinct asymptotic regimes. First, we prove convergence in law of the rescaled positions of the particles on a time scale of order $\log N$ and give a construction of the limit based on the records of a space-time Poisson point process. Second, we determine the appropriate scaling when we let first the time horizon, then $N$ go to infinity.

\bigskip

\noindent \textbf{Keywords.} branching random walk ; heavy-tailed distribution ; selection

\bigskip

\noindent \textbf{MSC2010.} 60K35 ; 60J80

\end{abstract}

\section{Introduction}
\subsection{Definitions}
\label{sec:definitions}

\paragraph{The $N$-BRW.} 
Let $X$ be a random variable taking values in $\R_+$ and define $h(x)$ by
 \begin{equation}\label{d:def-de-h}\P(X>x) = 1/h(x), \ x \geq 0.\end{equation} 
We assume throughout the paper that the function $h(x)$ is regularly varying at $+\infty$ with index $\alpha > 0$  (see e.g. \cite{Bingham1987} for a definition of regular variation). For every integer $N\ge 1$, we define the following $N$-particle system. At the beginning, all $N$ particles are located at the origin. At each time step, each of the $N$ particles branches into two, and all $2N$ particles then perform independent jumps according to the law of $X$. After this, only the $N$ particles at the maximal positions are retained. We call this system the $N$-branching random walk or $N$-BRW.

More formally, we define a sequence $(\X(n))_{n \geq 0}$ of $N$-tuples of real numbers that represent the successive populations of $N$ particles, with 
$$\X(n) =  \{ \X_1(n) \leq \cdots \leq \X_N(n) \}.$$
Let $(X_{n,i})_{n\ge 0,i\in[2N]}$ denote i.i.d. random variables distributed as $X$. Initially, one sets $\X_i(0) = 0$ for all $i\in[N] :=\{1,\ldots,N\}$. Then, for each integer $n \geq 0$, one inductively defines 
$\X(n+1) = \{ \X_1(n+1) \leq \cdots \leq \X_N(n+1) \}$ to be the $N$ largest numbers from the collection $(\X_i(n)+X_{n,2i+j})_{i\in[N],j\in\{0,1\}}$, sorted in ascending order. 

\paragraph{The stairs process.} We now define the \emph{stairs process}, a real-valued stochastic process which will be shown to approximate the $N$-BRW when $N$ is large. We first define a \emph{stairs measure} to be a non-zero measure $\mu$ on $(0,\infty)$ such that $\mu([a, +\infty))<+\infty$ for every $a > 0$ (in particular, $\mu$ is $\sigma$-finite).
The \(\mu\)-stairs process then is the real-valued stochastic process $\Rc(t)$ defined as follows: Given a Poisson point process on $\{(t,x):t,x>0\}$ with intensity $\dd t\otimes \mu$, define a process \((\xi_{t})_{t\geq0} \) by \(\xi _{t}=x\) if \((t,x)\) is an atom of the process and \(\xi_{t} = 0\) otherwise. Now define $\Rc(t)$ inductively as follows: For $t\le 0$, $\Rc(t) = 0$. For integer $n=0,1,2,\ldots$, knowing the value of $\Rc(t)$ for $t\le n$, define $\Rc(t)$ for $t\in (n,n+1]$ by 
\begin{equation}
\label{eq:def_stairs}
\Rc(t) = \max_{s\in[0,1]}(\Rc(t-s-1) + \xi_{t-s}).
\end{equation}

\begin{figure}[ht]
\begin{center}
 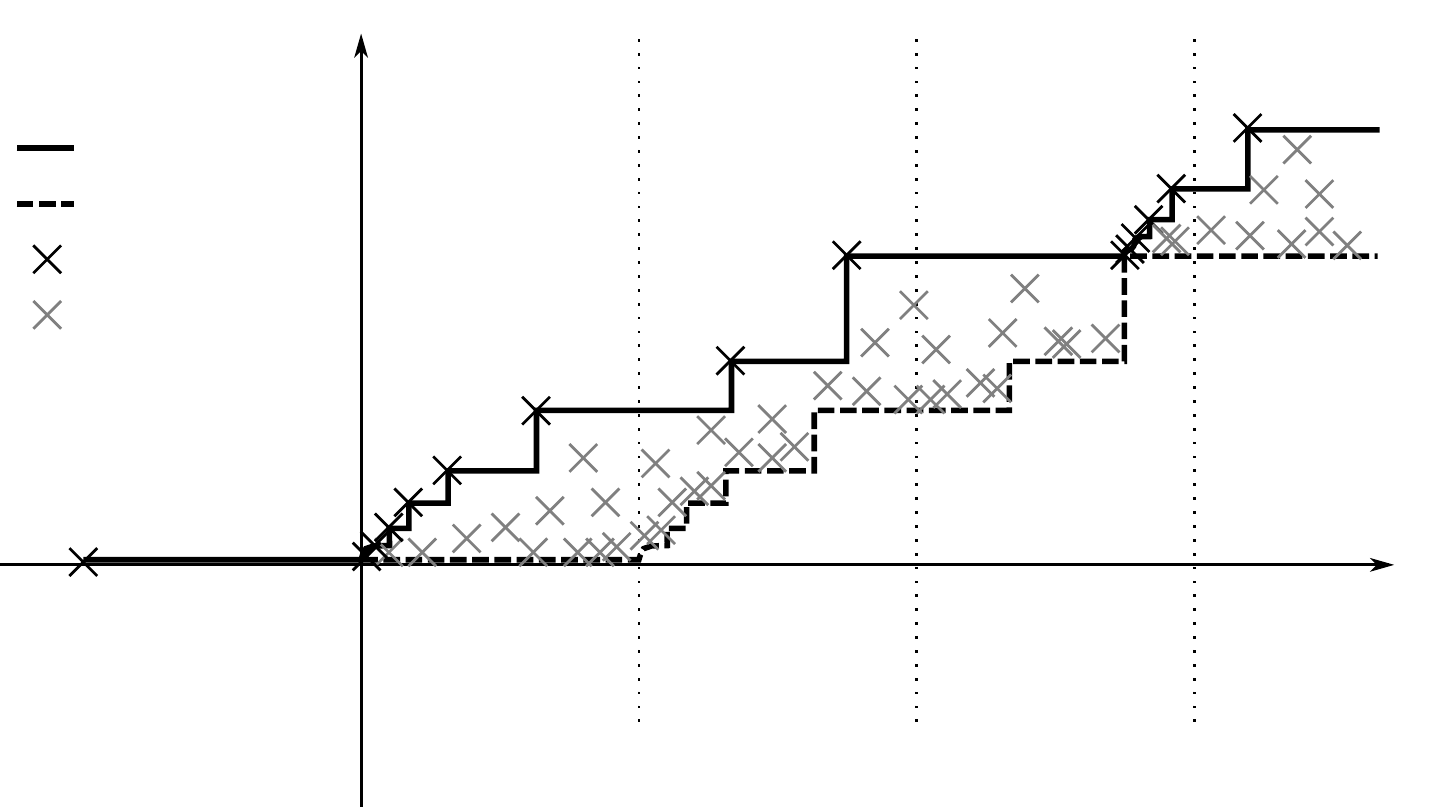
\end{center}
\caption{Graphical representation of the stairs process.}
\label{fig:1}
\end{figure}

This definition is equivalent to the following construction: Suppose the process $\Rc(t)$ is defined for \(t\le n\in\N.\)   Now generate points in the interval \((n,n+1]\) according to the above Poisson process and translate every atom $(t,x)$ by \(\Rc(t-1)\) in the $x$-direction (note that the graph of \(\Rc(t-1)\) is the graph of \(\Rc(t)\) shifted by 1 to the right). Then define \(\Rc(t)\) for \(t\in(n,n+1]\) to be the record process of these points. See Figure~\ref{fig:1} for a graphical representation.

One easily verifies that \(\Rc(t)\) is a non-decreasing c\`adl\`ag process and that the following representation holds for $t\ge0$:
\begin{align}
\label{eq:Rcrep}
\Rc(t) = \max\left\{\sum_{i=1}^k \xi_{t_i}: k\in\N,\ 0\le t_1<\cdots<t_k\le t,\ |t_i-t_{i-1}|\ge 1\ \forall i\right\} 
\end{align}

\begin{remark*}
The process \(\max_{s\in[0,t]}\xi_s\) is known in the literature as a \emph{Poisson paced record process} \cite{Bunge2001}.
The stairs process can be interpreted as a self-interacting version of it. Its long-term behaviour is quite different: While a Poisson paced record process usually grows logarithmically in \(t,\) the stairs process grows like a random walk, due to the existence of regeneration times (see Section~\ref{sec:stairs}).
\end{remark*}

\subsection{Statements of the results}

Define $\mu_\alpha$ to be the measure on $(0,\infty)$ defined by $\mu_\alpha([x,\infty)) = x^{-\alpha}$ and let $\Rc^\alpha(t)$ denote a realisation of the $\mu_\alpha$-stairs process. Define a sequence $(c_N)_{N\ge1}$ by $c_N = h^{-1}(2N\logtwo N)$, where $h^{-1}$ is the generalized inverse of $h$. Note that by the regular variation of $h$, we have $h(c_N)\sim 2N\logtwo N$ as $N\to\infty$. 

Our first theorem gives convergence in law of the maximum and the minimum of the $N$-BRW to a certain stairs process, after rescaling of space and time. For a definition of Skorokhod's $J_1$ and $SM_1$ topologies appearing in the statement of the theorem, see \cite[Chapter 12]{Whitt2002}. Note that the former topology is also commonly called \emph{the} Skorokhod topology.

\begin{theorem}
\label{th:convergence}
We have the following convergences in law, as $N\to\infty$:
\begin{align*}
(c_N^{-1}\X_N(\lfloor t\log_2 N\rfloor))_{t\ge 0}  &\Longrightarrow (\Rc^\alpha(t))_{t\ge0},&&\quad\text{in the $J_1$-topology}\\
(c_N^{-1}\X_N(\lfloor t\log_2 N\rfloor,c_N^{-1}\X_1(\lfloor t\log_2 N\rfloor))_{t\ge 0}  &\Longrightarrow (\Rc^\alpha(t),\Rc^\alpha(t-1))_{t\ge0},&&\quad\text{in the $SM_1$-topology.}
\end{align*}
\end{theorem}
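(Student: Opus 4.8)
The plan is to build a coupling between the top particles of the $N$-BRW and the records of an approximating space-time Poisson point process, and to show that on the $\log_2 N$ time scale this coupling becomes exact after rescaling by $c_N$. I would proceed in the following stages.

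First, I would analyze \emph{one generation} of the dynamics. Starting from a configuration $\X(n)$ whose spread is $o(c_N)$ (say all particles within a window of width $\ell_N$, with $\ell_N/c_N\to 0$), after branching each of the $N$ positions is duplicated and then receives an i.i.d.\ increment distributed as $X$. The key point is that, by regular variation of $h$, the number of the $2N$ increments exceeding $c_N\cdot\varepsilon$ is asymptotically Poisson with parameter $2N\,\P(X>c_N\varepsilon)\sim 2N(c_N)^{-\alpha}\varepsilon^{-\alpha}/\ldots$ — here one must be careful with the normalization $c_N=h^{-1}(2N\log_2 N)$, so that $2N\,\P(X>c_N x)\sim x^{-\alpha}/\log_2 N$. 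Thus in a \emph{single} step the probability of seeing \emph{any} macroscopic ($\asymp c_N$) jump is of order $1/\log_2 N$: jumps are rare on the generation scale but occur $\Theta(1)$ times over $t\log_2 N$ generations, which is exactly why time must be sped up by $\log_2 N$. Rescaling time by $\log_2 N$, the point process of (rescaled time, rescaled size) of macroscopic jumps among the leading particles converges to a Poisson point process with intensity $\dd t\otimes\mu_\alpha$ on $(0,\infty)^2$ — this is a standard Poisson approximation / convergence of empirical measures of triangular arrays, using that $h$ is regularly varying with index $\alpha$.

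Second, I would show that between macroscopic jumps the bulk of the $N$ particles stays tightly packed — the spread remains $o(c_N)$ — and, crucially, that the effect of a macroscopic jump is the \emph{stairs recursion}: when a particle at the front makes a jump of rescaled size $\xi$ at rescaled time $t$, after roughly one unit of rescaled time (i.e.\ $\log_2 N$ generations) its descendants fill up the whole population of $N$ particles, because the branching number $2$ means the progeny of a single particle reaches size $N$ in $\log_2 N$ generations — this is precisely the origin of the "delay by $1$'' and the constraint $|t_i-t_{i-1}|\ge 1$ in \eqref{eq:Rcrep}. So the maximal particle at rescaled time $t$ is the best, over chains of jumps separated by at least one time unit, of sums of jump sizes, plus a negligible $o(c_N)$ drift coming from the accumulation of microscopic ($o(c_N)$) increments — this last point needs a law-of-large-numbers estimate showing the typical per-generation displacement is $o(c_N/\log_2 N)$, which again follows from regular variation with $\alpha>0$ (the truncated mean of $X$ below level $c_N$ is $o(c_N/\log_2 N)$, at least when $\alpha>1$; for $\alpha\le1$ one truncates at the level of the minimum and controls the bulk drift by a separate argument). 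Combining these two facts identifies the limit of $c_N^{-1}\X_N(\lfloor t\log_2 N\rfloor)$ with $\Rc^\alpha(t)$ via the representation \eqref{eq:Rcrep}.

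Third, I would upgrade finite-dimensional convergence to functional convergence in the Skorokhod $J_1$ topology for the maximum: since $\Rc^\alpha$ is non-decreasing and pure-jump with a.s.\ distinct jump times and no jump at deterministic times, it suffices to control the jump times and jump heights and rule out a concentration of small jumps near any fixed time, which the Poisson structure gives. For the joint statement, the minimum $\X_1$ lags the maximum by one time unit (all $N$ particles descend from a front particle that was the max a unit of rescaled time earlier), so $(c_N^{-1}\X_N,c_N^{-1}\X_1)\Rightarrow(\Rc^\alpha(t),\Rc^\alpha(t-1))$; the reason the topology must weaken to $SM_1$ is that at a jump time of $\Rc^\alpha$ the pair traces out the segment from $(\Rc^\alpha(t-),\Rc^\alpha(t-1))$ to $(\Rc^\alpha(t),\Rc^\alpha(t-1))$, a genuinely two-dimensional jump that $J_1$ cannot accommodate but $SM_1$ can. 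The main obstacle, and the step requiring the most care, is the second one: quantifying "the bulk stays packed within $o(c_N)$ and its drift is negligible'' uniformly over $t\log_2 N$ generations, and simultaneously showing the progeny of a leading jumper really does take over the population in $(1+o(1))\log_2 N$ generations without being overtaken by a later independent jumper — this is where the heavy-tail exponent $\alpha$ and the precise choice $c_N=h^{-1}(2N\log_2 N)$ enter, and where a delicate truncation/coupling argument (comparing with a BRW without selection on one side and with the stairs recursion on the other) is needed.
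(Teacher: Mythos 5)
Your outline reproduces the paper's heuristic picture faithfully --- the scales, the Poisson limit of the macroscopic jumps, the one-unit takeover delay behind the constraint $|t_i-t_{i-1}|\ge1$ in \eqref{eq:Rcrep}, the lag of the minimum, and the passage from finite-dimensional to functional convergence are all correctly identified, and your closing remark about comparing with a free BRW on one side and the stairs recursion on the other is indeed the route the paper takes. The genuine gap sits exactly at the step you defer: the upper bound on the maximum. The tools you name for it (a law-of-large-numbers estimate for the truncated drift, plus ``the bulk stays packed'') do not suffice. With the window $m$ chosen so that $2^{m}=N\delta_N$, the expected number of particles at height $\ge \varepsilon c_N$ above the bulk at a fixed time is of order $N\delta_N\varepsilon^{-\alpha}$ --- it is \emph{large}, because the small tribes genuinely contain up to $o(N)$ particles --- so a union bound over high particles, or over the $N2^{m}$ lineages of a free BRW, gives nothing by itself. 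The missing idea (Proposition~\ref{prop:theta}) is to track the \emph{running} excess $\theta_n=\max_{k\le n}(\Y_N(k)-R^{N,m_N}(k))$ and to observe that for $\theta$ to increase by more than $x$ at step $n+1$, some particle must \emph{both} already sit at height $>x$ above $\Y_N(n-m_N)$ at time $n$ \emph{and} perform a further jump of size $>x$; the product of these two rare events yields the bound $Nm2^{m+1}/h(c_Nx)^2$, and it is the \emph{square} of $h(c_Nx)$ in the denominator that makes the accumulated excess vanish in $L^p$, $p<2\alpha$, over $t\log_2N$ generations. Controlling the first factor is moreover not an LLN statement but a single-big-jump large-deviation estimate $\P(S_m>x)\le 2m\,\P(Y>x)$ for heavy-tailed sums, applied uniformly over all lineages of the free BRW. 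Without this two-factor mechanism the upper half of your sandwich is not established.

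Two smaller points. First, the point process of ``macroscopic jumps among the leading particles'' is awkward to make rigorous because selection determines which particles are present; the paper sidesteps this by building the discretised stairs process \eqref{eq:dsp} from \emph{all} $2N$ i.i.d.\ jumps of each generation, each added to the one-unit-delayed record (a proxy for the bulk position), and then matching it to the Poisson process via Serfling's coupling (Proposition~\ref{prop:dsp_stairs}). Second, your explanation of why the pair only converges in $SM_1$ is off: a jump in the first coordinate alone is a segment parallel to a coordinate axis and causes no difficulty for the product $J_1$ topology. The actual obstruction, as the remark after the theorem states, is that $\X_1$ may realise a single limiting jump as two consecutive macroscopic jumps (the old tribe can be eliminated over two generations rather than one); merging jumps is permitted by $M_1$ but not by $J_1$, and the proof then combines monotonicity (giving $M_1$ convergence of each coordinate) with the almost sure absence of common jumps of $\Rc^\alpha(\cdot)$ and $\Rc^\alpha(\cdot-1)$ to pass to the product topology.
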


\begin{remark*}
One cannot expect convergence of $c_N^{-1}\X_1(\lfloor t\log_2 N\rfloor)_{t\ge 0}$ in the $J_1$-topology, since for some values of $N$, it may have two consecutive macroscopic jumps.
\end{remark*}

For a random variable $Y$, denote by $\mathcal L(Y)$ the law of $Y$. Denote by $d(\cdot,\cdot)$ the Prokhorov metric on the space of probability measures on $\R$. For two positive sequences $a_N$ and $b_N$, write $a_N\sim b_N$ if $a_N/b_N\to1$ as $N\to\infty$.

The next theorem is our main theorem. It studies the limiting behaviour of the $N$-BRW, when we let first the time horizon, then $N$ go to infinity.

\begin{theorem}
\label{th:main} We distinguish between the following cases:
\begin{itemize}
\item $\alpha > 1$: The limit $v_N = \lim_{n\to\infty} \X_N(n)/n = \lim_{n\to\infty} \X_1(n)/n$ exists almost surely and in \(L^{1}\) and satisfies $v_N \sim \rho_\alpha c_{N}/\logtwo N.$ Here, the limit $\rho_\alpha = \lim_{t\to\infty} \Rc^\alpha(t)/t$ exists almost surely and in $L^1$ and is a positive and finite constant.
\item $\alpha = 1$, $\E[X]<\infty$: The limit $v_N = \lim_{n\to\infty} \X_N(n)/n = \lim_{n\to\infty} \X_1(n)/n$ exists almost surely and in \(L^{1}\) and satisfies $v_N \sim (c_{N}/\logtwo N)\int_1^\infty h(c_N)/h(c_Nx)\,\dd x.$
\item $\alpha = 1$, $\E[X] = \infty$: Set $b^N_n = \int_1^{h^{-1}(n)}h(c_N)/h(c_N x)\,\dd x$. Then, for $i\in\{1,N\}$,
\[
\lim_{N\to\infty} \limsup_{n\to\infty} d\left(\mathcal L\left(\frac{\log_2N}{c_N}\frac{\X_i(n)}{nb^N_n}\right), \delta_1\right) = 0.
\]
\item $0 < \alpha < 1$:  Let  $W_\alpha$ be a random variable with Laplace transform 
\begin{equation}
\label{eq:Walpha}
\E[e^{-\lambda W_\alpha}] = \exp(-\alpha\int_0^\infty (1-e^{-\lambda x})x^{-\alpha-1}\,\dd x).
\end{equation}
Then, for $i\in\{1,N\}$,
\[
\lim_{N\to\infty} \limsup_{n\to\infty} d\left(\mathcal L\left((2N)^{-\alpha}\frac{\X_i(n)}{h^{-1}(n)}\right), \mathcal L(W_\alpha)\right) = 0.
\]

\end{itemize}
\end{theorem}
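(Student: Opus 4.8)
The plan is to reduce all four cases to the analysis of the stairs process (Theorem~\ref{th:convergence} and the regeneration structure mentioned in Section~\ref{sec:stairs}), together with a separate analysis of the \emph{first} $\lN$ or so steps of the $N$-BRW, which is the regime in which the rescaling of Theorem~\ref{th:convergence} has not yet "kicked in". More precisely, I expect the proof to hinge on the following heuristic: for fixed $N$, the $N$-BRW position $\X_N(n)$ after $n$ steps can be decomposed (up to lower-order errors) into a contribution coming from the first $O(\log N)$ steps — during which the cloud of $N$ particles spreads out from a single point and one single very large jump of order $h^{-1}(2N\log_2 N)=c_N$ dominates — and a contribution from the remaining $n-O(\log N)$ steps, which is governed by the stairs process on its linear-growth (regeneration) regime. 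For $\alpha>1$ and $\alpha=1$ with $\E[X]<\infty$, the stairs process has a finite linear speed $\rho_\alpha$ (resp. the speed given by the integral $\int_1^\infty h(c_N)/h(c_N x)\,\dd x$, which is finite under the stated moment assumption), so $v_N=\lim_n \X_i(n)/n$ exists by subadditivity/superadditivity applied directly to the $N$-BRW, and its asymptotics follow by plugging the time-rescaling $n\mapsto t\log_2 N$ of Theorem~\ref{th:convergence} into $\X_N(\lfloor t\log_2 N\rfloor)\approx c_N\Rc^\alpha(t)$ and using $\Rc^\alpha(t)/t\to\rho_\alpha$.

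For the existence of $v_N$ I would first establish that $\X_N(n)/n$ and $\X_1(n)/n$ converge a.s.\ and in $L^1$ to the same limit: the gap $\X_N(n)-\X_1(n)$ is, by the one-step dynamics, controlled by the largest of $2N$ displacements made in a single step (after the particles have clumped), hence has sublinear growth, while the sequence $(\X_N(n))_n$ is superadditive in a suitable sense (the process restarted from its own minimum is stochastically dominated by the process restarted from a flat configuration at that minimum, and vice versa), so Kingman's subadditive ergodic theorem applies. This is essentially the content of the regeneration analysis in Section~\ref{sec:stairs} transported to the $N$-BRW level. Then $v_N = \E[\X_N(1)\mid \text{regeneration}]/\E[\text{regeneration length}]$, and matching this with $c_N\rho_\alpha/\log_2 N$ requires Theorem~\ref{th:convergence} plus a uniform integrability statement: $c_N^{-1}\X_N(\lfloor t\log_2 N\rfloor)$ must converge in $L^1$, not merely in law. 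Proving this uniform integrability — controlling the upper tail of $c_N^{-1}\X_N(n)$ uniformly in $N$ and $n/\log_2 N$ — is where the polynomial (rather than exponential) tails bite, and I expect it to be the \emph{main obstacle}; one would bound $\X_N(n)$ above by the maximum of an unrestricted BRW (which grows like $2^n$ times an $n$-fold maximum of displacements) for $n\le \lN$ and by a renewal-type bound afterwards, and show the resulting tail has a finite $(1+\delta)$-moment uniformly.

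For $\alpha=1$ with $\E[X]=\infty$ and for $0<\alpha<1$, the linear speed is infinite (resp. the correct normalization is $h^{-1}(n)$ rather than $n$), and here I would argue differently, working directly with the distributional limit as $n\to\infty$ for fixed large $N$ and only then sending $N\to\infty$. In the case $0<\alpha<1$: for fixed $N$, after the first $O(\log N)$ steps the position $\X_i(n)$ is, up to negligible corrections, a sum of $\Theta(n/\log N)$ independent "record increments," each of which is the maximum of $\Theta(2N)$ i.i.d.\ displacements, i.e.\ of order $h^{-1}(2N)\cdot(\text{Fréchet-type variable})$; summing $n$ such heavy-tailed ($\alpha<1$) increments and normalizing by $h^{-1}(n)$ yields, by the stable limit theorem, the $\alpha$-stable variable $W_\alpha$ with the Laplace transform in~\eqref{eq:Walpha}, after absorbing the factor $(2N)^{-\alpha}$ coming from $h^{-1}(2Nn)\sim (2N)^{1/\alpha}h^{-1}(n)$ — wait, more carefully, one uses $h(c_N)\sim 2N\log_2 N$ together with regular variation to see that the per-step scale relative to $h^{-1}(n)$ contributes exactly the prefactor stated. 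The case $\alpha=1$, $\E[X]=\infty$ is the delicate intermediate one: here the increments are barely non-integrable, the normalization $nb^N_n$ with $b^N_n=\int_1^{h^{-1}(n)}h(c_N)/h(c_Nx)\,\dd x$ is a truncated-mean correction (a Feller-type weak law of large numbers for triangular arrays with slowly varying truncation), and the limit concentrates at $\delta_1$; I would prove this by a truncation argument, discarding the $O(1)$ largest record increments and applying an $L^1$ or $L^2$ estimate to the truncated sum. The common technical core of all of this is a precise comparison, with explicit error bounds, between the $N$-BRW and the stairs/record process — I expect that comparison, established for Theorem~\ref{th:convergence}, to carry most of the weight, with the case-by-case limit theorems for heavy-tailed sums being relatively standard once the comparison and the requisite uniform tail bounds are in hand.
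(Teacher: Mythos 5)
Your global picture is right in several respects: the existence of $v_N$ via subadditivity, the order of limits ($n\to\infty$ for fixed $N$ first, then $N\to\infty$) in the infinite-mean cases, and the idea that a two-sided comparison between the $N$-BRW and a stairs/record process carries the weight, with the case-by-case conclusions following from classical heavy-tailed limit theorems applied to regeneration increments. This is indeed the paper's architecture: Propositions~\ref{prop:lower} and~\ref{prop:dsp_stairs} give an exact stochastic lower bound by a $\mu_{N,\ell_N}$-stairs process, Corollary~\ref{cor:upper} gives an $L^p$ bound (growing at most linearly in $n$ with a small prefactor) on the overshoot above a $(N,m_N)$-discretised stairs process, and Theorem~\ref{th:stairs} supplies the long-time behaviour of the $\mu_N$-stairs process for each fixed $N$ via its regeneration times and the tail estimate of Proposition~\ref{prop:Rc_tau}.

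However, there is a genuine gap in your treatment of the finite-mean cases ($\alpha>1$ and $\alpha=1$, $\E[X]<\infty$). You propose to identify $v_N\sim \rho_\alpha c_N/\log_2 N$ by combining Theorem~\ref{th:convergence} with uniform integrability of $c_N^{-1}\X_N(\lfloor t\log_2 N\rfloor)$. That does not suffice: $v_N$ is a $t\to\infty$ limit for fixed $N$, while Theorem~\ref{th:convergence} is an $N\to\infty$ statement at bounded $t$, so what is needed is an interchange of the limits $t\to\infty$ and $N\to\infty$, and uniform integrability at each fixed $t$ gives no control as $t\to\infty$. Your fallback, a regeneration identity $v_N=\E[\X_N(1)\mid\text{regeneration}]/\E[\text{regeneration length}]$ at the level of the $N$-BRW itself, is not available either: the $N$-particle configuration does not return to an exactly flat state, and the paper never constructs such a structure for the particle system. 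The paper's resolution is to prove a separate fixed-$N$ law of large numbers for the $\mu_N$-stairs process (Theorem~\ref{th:stairs}), where regeneration \emph{is} exact, and to control the $N$-dependence of the resulting speeds $\rho_N=\E[\Rc^N(\tau^N_1)]/\E[\tau^N_1]$ through the tail estimate of Proposition~\ref{prop:Rc_tau}, whose uniformity in $N$ is precisely what replaces the uniform integrability you were after (and is genuinely needed only when $\alpha=1$, $\E[X]<\infty$). The transfer to the $N$-BRW then uses only that the coupling errors are $o(n)$ after dividing by $n$, not any $L^1$ convergence on the $\log_2 N$ time scale. Two smaller points: the real technical core is the upper-bound coupling (Proposition~\ref{prop:theta}, resting on the large-deviation bound $\P(S_m>x)\le 2m\,\P(Y>x)$ for regularly varying tails), which your sketch does not identify; and your claim that $\X_N(n)-\X_1(n)$ is controlled by the largest single-step displacement is incorrect — the correct statement, and the one the paper uses for $\X_1$, is $\X_N(n-\lceil\log_2 N\rceil)\le\X_1(n)\le\X_N(n)$, since the maximal particle at time $n-\lceil\log_2 N\rceil$ has at least $N$ descendants $\lceil\log_2 N\rceil$ steps later.
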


\subsection{Heuristics and proof strategy}

The global heuristic picture of the $N$-BRW is the following: At a typical time and viewed on the space scale $c_N$, the particles are divided into one big ``tribe'' located near the left-most particle and containing all but $o(N)$ particles, the remaining particles to the right being split into a $O(1)$ number of smaller tribes. At each time step, the number of particles in every small tribe is multiplied by two, which eventually leads to extinction of the big tribe and another tribe taking over. Furthermore, new tribes are formed by particles performing (rightward) jumps of magnitude $c_N$ out of the big tribe. The value of $c_N$ has been chosen so that these events occur  on the time scale $\log_2 N$, which is precisely the time it takes for a new tribe to grow to size $N$. 

As $N\to\infty$, the jumps leading to new tribes are described by a space-time Poisson point process shifted in space by the position of the big tribe. This leads to the definition of the stairs process from Section~\ref{sec:definitions} and illustrated in Figure~\ref{fig:1}. Note that the record points of that process exactly correspond to the creation of tribes which will eventually take over the population, the other points representing tribes which get extinct before reaching a size of order $N$.

In order to render this picture rigorous, we will couple the $N$-BRW with a discretized version of the stairs process, see \eqref{eq:dsp}, in such a way that the error is bounded in $L^p$ for $p<2\alpha$. Ideally, we would have liked to use a single coupling between both processes to derive Theorems~\ref{th:convergence} and~\ref{th:main} and say something about the empirical measure of the $N$-BRW. However, it turned out to be more convenient to use separate couplings for upper and lower bounds for the positions of the right- and left-most particles. The trickier part here consisted in the upper bound, the key to which is Proposition~\ref{prop:theta}. An important ingredient consists of large deviation estimates for sums of iid variables with regularly varying tails.

Theorem~\ref{th:main} is then derived from the above coupling and Theorem~\ref{th:stairs} below. In order to prove the latter, we define a  regeneration structure for the stairs process, which permits to use classical results on random walks with regularly varying tails.

Note that analogues of the above theorems should remain valid if one allows $X$ to take on negative values or if one considers more general reproduction laws. Since this increases the technical difficulties without leading to new phenomena, we chose to keep to our setting for simplicity.

\subsection{Discussion}

For light-tailed displacement distributions, i.e. satisfying an exponential moment assumption, the $N$-BRW model has been studied in the physics literature as a microscopic stochastic model describing the propagation of a front, along with several variants \cite{Brunet1999,Brunet2001,Brunet2006,Brunet2006a}. In the limit as $N\to\infty$, the dynamics of the model is described by an analog of the Fisher--Kolmogorov--Petrovskii--Piskounov (FKPP) equation, which is a prototypical example of a reaction-diffusion equation admitting travelling wave solutions \cite{Fisher1937,Kolmogorov1937} (see \cite{Durrett2009} for a rigorous result of this type). One is then interested in understanding how the behaviour of the model for large but finite $N$ reflects that of the limiting equation. The main results are the following:
\begin{itemize}
\item The cloud of particles propagates with a finite asymptotic velocity $v_N$. As $N\to\infty$, $v_N$ converges to the velocity $v$ of a travelling wave solution to the corresponding FKPP-type equation, which is also the speed of the right-most particle in the BRW without selection. Moreover, this convergence takes place at the unusually slow rate of $(\log N)^{-2}$ \cite{Brunet1997,Berard2010}.
\item The relevant time scale both for macroscopic fluctuations of the cloud and for coalescence of ancestral lineages is $(\log N)^3$ \cite{Brunet2006,Brunet2006a,Berestycki2010,Maillard2013a}. The genealogy is asymptotically described by the Bolthausen--Sznitman coalescent \cite{Berestycki2010}.
\end{itemize}

By contrast, in our heavy-tailed setting, the following happens:
\begin{itemize}
\item The particles propagate either linearly or superlinearly (but still at most polynomially) in time. In both cases, the scaling factor due to the number of particles in the system grows roughly polynomially in $N$. 
Without selection (i.e.\ for the classical BRW), the right-most particle would propagate exponentially fast in time \cite{Durrett1983} due to the exponential growth of the number of particles\footnote{For the stretched exponential case, see \cite{Gantert2000}.}.
Note that a similar behavior is observed in reaction-diffusion equations with fractional Laplacian or other non-local operators \cite{Cabre2013}, which play the role of the FKPP equation in this context.
\item The relevant time scale both for macroscopic fluctuations of the cloud and for coalescence of ancestral lineages is $\log N$. The genealogy is trivial\footnote{Here, \emph{trivial} means that the genealogy is given by the ``star-shaped coalescent''. This follows from the heuristic picture described above, although we haven't worked out a full proof of this fact.}. Moreover, the fluctuations come from large jumps of single particles, in contrast to the more complex mechanism leading to the fluctuations in the light-tailed setting \cite{Maillard2013a}.
\end{itemize}

A natural question and possibility for future research is now to investigate what happens in between the two scenarios of light-tailed displacement distributions (satisfying an exponential moment assumption) and the polynomial tails considered in the present article.

\subsection{Organisation of the paper}
In Section~\ref{sec:stairs}, we first derive some basic properties of the stairs process and prove Theorem~\ref{th:stairs}, which gives its long-time behaviour. In Section~\ref{sec:brw_coupling}, we bound the $N$-BRW from below and from above by a discretised version of the stairs process. The main work here lies in the upper bound, which is contained in Proposition~\ref{prop:theta}. In the short Section~\ref{sec:dsp_coupling}, we couple the stairs process with its discretised version. Sections~\ref{sec:stairs}, \ref{sec:brw_coupling} and \ref{sec:dsp_coupling} can be read independently of one another. Finally, Section~\ref{sec:proofs} contains the proofs of Theorems~\ref{th:convergence} and \ref{th:main}, relying on the results obtained in the previous sections.

\section{Properties of the stairs process}
\label{sec:stairs}
Throughout this section, we assume that $\mu$ is a stairs measure as defined in the last section (i.e., $\mu$ is a non-zero measure on $(0,\infty)$ such that $\mu([a, +\infty[)<+\infty$ for every $a > 0$). We regard $\mu$ as an element in the space of $\sigma$-finite measures on $(0,\infty)$ endowed with the vague topology, i.e.\ the weak topology with respect to the space of continuous functions supported on a compact subset of $(0,\infty)$. Statements such as ``as $\mu$ varies'' always refer to this topology. We further denote by $\Rc(t)$ a realization of the $\mu$-stairs process.

We define a regeneration structure for $\Rc(t)$ as follows: Let $\tau_0=0$ and for $n\ge 1$, let $\tau_n = \inf\{t> \tau_{n-1}+1: \Rc(t) = \Rc(t-1)\}$. By the definition of the stairs process, the random times $(\tau_n)_{n\in\N}$ are regeneration times, i.e.\ the collection of pairs $(\Rc(\tau_n)-\Rc(\tau_{n-1}),\tau_n-\tau_{n-1})_{n\ge 1}$ are i.i.d. Furthermore, we have the following lemma:

\begin{proposition}
\label{prop:stairs_regen}
There exists a constant $C>0$ (depending on $\mu$), such that $\P(\tau_1 > t) \le C^{-1} e^{-Ct}$ for all $t>0$. This constant can be chosen to vary continuously with $\mu$.
\end{proposition}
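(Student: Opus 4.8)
The plan is to reduce the claim to a uniform ``local regeneration'' estimate and then iterate it. Write $\F_s$ for the $\sigma$-field generated by the Poisson point process restricted to $(0,s]\times(0,\infty)$, and note that $\{\tau_1>s\}\in\F_s$ for $s\ge1$. I would prove that there exist $T<\infty$ and $\delta\in(0,1)$, depending on $\mu$, such that for every $s\ge1$,
\[
\P\bigl(\exists\,u\in(s,s+T]\ \text{with}\ \Rc(u)=\Rc(u-1)\ \bigm|\ \F_s\bigr)\ \ge\ \delta\qquad\text{a.s.}
\]
Granting this, on $\{\tau_1>s\}$ the occurrence of any such $u$ forces $\tau_1\le s+T$, so $\P(\tau_1>s+T)\le(1-\delta)\,\P(\tau_1>s)$; iterating from $s=1$ gives $\P(\tau_1>1+kT)\le(1-\delta)^k$ for $k\in\N$, and the bound $\P(\tau_1>t)\le C^{-1}e^{-Ct}$ follows with a suitable $C=C(T,\delta)>0$.

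The key deterministic input is the identity (immediate from \eqref{eq:def_stairs}, or from \eqref{eq:Rcrep}) that for $t\ge0$
\[
\Rc(t)=\max\Bigl(0,\ \sup\{\Rc(t'-1)+\xi_{t'}:\ t'\le t\ \text{an atom}\}\Bigr),
\]
so that $\Rc(u)=\Rc(u-1)$ holds as soon as every atom $t'\in(u-1,u]$ satisfies $\Rc(t'-1)+\xi_{t'}\le\Rc(u-1)$. For the local estimate I would fix $a>0$ small enough that $\lambda:=\mu([a,\infty))$ and $\lambda':=\mu([2a,\infty))$ are both positive (they are automatically finite), which is possible since $\mu$ is a non-zero stairs measure, and take $T=3$. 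Let $E$ be the event --- depending only on the Poisson process on $(s,s+3]\times(0,\infty)$, hence independent of $\F_s$ --- that there is no atom of value $\ge a$ in $(s,s+1]\cup[s+2,s+3]$ and exactly one atom of value $\ge a$ in $(s+1,s+2)$, whose value is moreover $\ge2a$. On $E$, if $v\in(s+1,s+2)$ denotes that atom, then both unit intervals $[v-1,v)$ and $(v,v+1)$ are free of atoms of value $\ge a$; using only this and the monotonicity of $\Rc$ one gets $\Rc(v^-)<\Rc(v-1)+a$ and $\xi_{t'}<a$ for every atom $t'\in(v,v+1)$, while $\Rc(v)\ge\Rc(v-1)+\xi_v\ge\Rc(v-1)+2a$ because $v$ is an atom. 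Combining, every atom $t'\in(v,v+1)$ satisfies $\Rc(t'-1)+\xi_{t'}<\Rc(v-1)+2a\le\Rc(v)$, whence $\Rc(v+1)=\Rc(v)$; since $v+1\in(s+2,s+3)\subseteq(s,s+T]$, this is a regeneration in the required window.

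As $\P(E)=\lambda'e^{-3\lambda}=:\delta$ depends only on the future of the process, this yields the local estimate (modulo the harmless event, of conditional probability $0$ given $v$, that an atom sits exactly at $v+1$, which I would dispatch in passing). For the continuity of $C$ in $\mu$, one holds $a$ fixed while $\mu$ varies in a vague neighbourhood and replaces the indicator estimates above by estimates against fixed continuous functions with compact support in $[a,\infty)$, which makes $\lambda$, $\lambda'$, hence $\delta$ and $C$, continuous; I would leave these routine checks implicit.

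The step I expect to be the real obstacle is the construction of $E$ together with the verification that it forces a regeneration. The difficulty is that $\mu$ may have infinite total mass --- so every time interval carries infinitely many small atoms --- and unbounded support, so $\Rc$ admits no a priori bound and has no robust ``flat'' stretch; one genuinely has to engineer a bounded-time event producing one, uniformly over the arbitrary past. Sandwiching a single atom of value $\ge2a$ between two unit intervals free of atoms above level $a$ is exactly the device that simultaneously (i) forces $\Rc$ to jump by at least $a$ at $v$ and (ii) caps the sizes of the atoms in $(v,v+1)$ below the new level of $\Rc$; extracting both from one configuration is the heart of the argument.
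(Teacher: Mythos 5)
Your proof is correct and follows essentially the same route as the paper: both isolate a single atom of size $\ge 2a$ in a middle unit interval, flanked by two unit intervals containing no atoms of size $\ge a$, deduce that this forces a jump of $\Rc$ above everything reachable in the following unit interval (hence a regeneration within three time units), and iterate the resulting uniform conditional bound to get exponential decay. Your event $E$ is the paper's event $\{M^1_n<m,\ M^1_{n+1}>2m,\ M^2_{n+1}<m,\ M^1_{n+2}<m\}$ in only slightly different clothing.
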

\begin{proof}
By definition of $\mu$, there exists $m\in (0,\infty)$, such that $\mu([m,\infty)) < \infty$ and $\mu((2m,\infty)) > 0$. Let $M^1_n\ge M^2_n\ge\cdots$ be the $x$-coordinates of the atoms of the Poisson process from the definition of the $\mu$-stairs process in the time-interval $[n,n+1)$, arranged in decreasing order. We then have for each $n\ge0$,
\begin{equation}
\label{eq:reg}
\P(\tau_1 \le n+3\,|\,\F_n) \ge \P\left(M^1_n < m,\ M^1_{n+1} > 2m,\ M^2_{n+1} < m,\ M^1_{n+2} < m\right),
\end{equation}
because the event on the right-hand side ensures that there is a time $T\in[n+1,n+2)$ with $\Rc(T)-\Rc(T-) > m$ and with $\Rc(T+s) = \Rc(T)$ for all $s\in[0,1]$. This implies $\tau_1 \le T+1 \le n+3$. Now, the right-hand side of \eqref{eq:reg} is positive, independent of $n$ and continuous in $\mu$. The proposition is immediate.
\end{proof}

\begin{proposition}
\label{prop:stairs_integrable}
Suppose $\int_1^\infty x\mu(\dd x) <\infty$. Then the limit $\rho = \lim_{t\to\infty} \Rc(t)/t$ exists almost surely and in $L^1$ and satisfies $\rho = \E[\Rc(\tau_1)]/\E[\tau_1] > 0$.
\end{proposition}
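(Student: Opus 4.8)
The plan is to invoke the renewal structure established just before the statement. We have i.i.d. pairs $(\Rc(\tau_n)-\Rc(\tau_{n-1}),\tau_n-\tau_{n-1})_{n\ge1}$, and Proposition~\ref{prop:stairs_regen} gives exponential tails for $\tau_1$, so in particular $\E[\tau_1]<\infty$ and $\E[\tau_1]>1>0$. The first key step is to show $\E[\Rc(\tau_1)]<\infty$; this is where the hypothesis $\int_1^\infty x\,\mu(\dd x)<\infty$ enters. From the representation \eqref{eq:Rcrep}, $\Rc(\tau_1)$ is bounded above by the sum of all jumps $\xi_s$ with $s\in[0,\tau_1]$, i.e.\ by $\sum_{(t,x):\,t\le\tau_1}x$ over atoms $(t,x)$ of the Poisson process. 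I would split each jump into its part below and above the threshold $1$ (or some fixed level): the contribution of jumps of size $\le1$ in a time interval of length $\tau_1$ has expectation $\le \E[\tau_1]\int_{(0,1]}x\,\mu(\dd x)$, which is finite since $\mu([a,\infty))<\infty$ for all $a>0$ forces $\mu$ to be finite on $[\epsilon,1]$, and the small-jump part near $0$ is controlled because $\Rc$ only ever adds jumps at least $1$ apart — actually it is cleaner to note that $\mu$ restricted to $(0,\infty)$ need not be finite near $0$, so I should instead bound $\Rc(\tau_1)$ directly using \eqref{eq:def_stairs}: in each of the $\lceil\tau_1\rceil$ unit time steps the process increases by at most $\max_{s}\xi_s$ over that step, and $\E[\max$ of finitely many atoms in a unit box$]$ is finite precisely because $\int_1^\infty x\,\mu(\dd x)<\infty$ (the number of atoms above level $1$ is Poisson, and their sizes are integrable by hypothesis; atoms below level $1$ contribute at most $1$ each per step but there may be infinitely many — here one uses that a record only occurs for the largest atom, so the increase per step is at most $1+\max\{x:(t,x)\text{ atom in the step}\}$, wait, no). Let me restate: the increase of $\Rc$ over one unit step is at most the single largest atom in that step plus possibly contributions at the boundary; it is bounded by $1+\sup\{x:(t,x)$ an atom in the step with $x>$ some positive level$\}$, and since there are a.s.\ finitely many atoms above any positive level in a bounded time region, this sup is a finite random variable with finite mean by $\int^\infty x\,\mu(\dd x)<\infty$. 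Combined with the exponential tail of $\tau_1$ and independence of the two unit steps' contributions from $\tau_1$ being large — handled via a sum over $\{\tau_1>k\}$ and Cauchy--Schwarz or a direct domination — one gets $\E[\Rc(\tau_1)]<\infty$.

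The second step is the law of large numbers. Write $\Rc(t)$ at a time $t$ sandwiched as $\tau_{n}\le t<\tau_{n+1}$, so $\Rc(\tau_n)\le\Rc(t)\le\Rc(\tau_{n+1})$ since $\Rc$ is non-decreasing. By the strong law of large numbers applied to the i.i.d.\ sequences $(\tau_n-\tau_{n-1})$ and $(\Rc(\tau_n)-\Rc(\tau_{n-1}))$ (both with finite mean by Step~1 and Proposition~\ref{prop:stairs_regen}), we get $\tau_n/n\to\E[\tau_1]$ and $\Rc(\tau_n)/n\to\E[\Rc(\tau_1)]$ almost surely, hence $\Rc(\tau_n)/\tau_n\to\E[\Rc(\tau_1)]/\E[\tau_1]=:\rho$. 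Letting $n=n(t)$ be the renewal count, $n(t)\to\infty$ a.s.\ (again by the tail bound on $\tau_1$), and the sandwich together with $\tau_{n+1}/\tau_n\to1$ yields $\Rc(t)/t\to\rho$ a.s. Positivity of $\rho$ is clear since $\Rc(\tau_1)>0$ a.s.\ (the very construction in the proof of Proposition~\ref{prop:stairs_regen} produces a jump of size $>m$ before time $\tau_1$) and $\E[\tau_1]<\infty$.

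For the $L^1$ convergence, I would argue uniform integrability of $(\Rc(t)/t)_{t\ge1}$. Dominating $\Rc(t)$ by $\sum_{j=0}^{\lceil t\rceil}Z_j$ where $Z_j:=1+\sup\{x:(s,x)$ atom in $[j,j+1)$ with $x\ge1\}$ are i.i.d.\ with finite mean (from $\int_1^\infty x\,\mu(\dd x)<\infty$), the family $\{(1/t)\sum_{j=0}^{\lceil t\rceil}Z_j\}$ is uniformly integrable by the standard fact that normalized sums of i.i.d.\ integrable variables are UI; hence so is $\{\Rc(t)/t\}$, and a.s.\ convergence upgrades to $L^1$. Finally, $\rho=\E[\Rc(\tau_1)]/\E[\tau_1]$ is read off from the a.s.\ limit.

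I expect the main obstacle to be Step~1, the finiteness and proper bookkeeping of $\E[\Rc(\tau_1)]$: one must carefully justify the domination of $\Rc(\tau_1)$ by a sum over an a.s.-finite (but random, and correlated with $\tau_1$) number of i.i.d.\ per-step increments, control the small-atom contributions which a priori could be numerous, and decouple the size of $\tau_1$ from the increment sizes — most cleanly by summing $\E[\Rc(\tau_1)]=\sum_{k\ge0}\E[(\Rc(k+1)\wedge\Rc(\tau_1)-\Rc(k))\Ind_{\tau_1>k}]$, bounding each term by $\E[Z_k]\,\P(\tau_1>k)^{1/2}\E[Z_k^2]^{1/2}/\E[Z_k]$ or simply $\E[Z_k\Ind_{\tau_1>k}]\le\E[Z_k^2]^{1/2}\P(\tau_1>k)^{1/2}$ if one assumes a second moment, or better, noting $\tau_1>k$ is measurable with respect to atoms in $[0,k)$ together with $\{M^1_{k}$ or nearby atoms too large$\}$, which is not independent of $Z_k$, so a short extra argument (e.g.\ conditioning on $\F_k$ and using that the per-step increment on $[k,k+1)$ has finite mean given $\F_k$) closes the gap, and the geometric decay of $\P(\tau_1>k)$ makes the series converge.
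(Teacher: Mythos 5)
Your proof is correct, but it follows a genuinely different route from the one printed in the paper --- in fact it is precisely the alternative the authors allude to in their closing remark. The paper first gets existence of $\rho$ and the $L^1$ convergence ``for free'' from Kingman's subadditive ergodic theorem (using the subadditivity $\Rc(0,m)\le\Rc(0,n)+\Rc(n,m)$ and the integrability of $\Rc(1)$), and only afterwards deduces $\E[\Rc(\tau_1)]<\infty$ and the identity $\rho=\E[\Rc(\tau_1)]/\E[\tau_1]$ via the converse to the law of large numbers. You go the other way: you prove $\E[\Rc(\tau_1)]<\infty$ by hand, then run the standard renewal--reward sandwich for the almost sure limit and a separate uniform-integrability argument for $L^1$. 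Your route is more elementary and self-contained; the paper's is shorter because Kingman absorbs both the existence of the limit and the $L^1$ statement. Your ``main obstacle'' in Step 1 is in fact easier than you fear: writing $\Rc(\tau_1)\le\sum_{j\ge0}Z_j\Ind_{\tau_1>j}$ with $Z_j:=1+\max\{x:(s,x)\text{ atom in }[j,j+1)\}$ (max over atoms with $x\ge1$, zero if none), the event $\{\tau_1>j\}$ is determined by the restriction of the Poisson process to $[0,j]\times(0,\infty)$ and is therefore \emph{independent} of $Z_j$, so $\E[\Rc(\tau_1)]\le\E[Z_0]\sum_j\P(\tau_1>j)<\infty$ directly, with no Cauchy--Schwarz or second moments needed.

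Two small slips to correct. First, $\Rc(\tau_1)>0$ does \emph{not} hold almost surely: if $\mu$ is a finite measure, then with positive probability there are no atoms in $[0,2]\times(0,\infty)$, in which case $\tau_1=1$ and $\Rc(\tau_1)=0$. What you need (and what the paper uses) is only $\E[\Rc(\tau_1)]>0$, which follows from $\tau_1\ge1$, the monotonicity $\Rc(\tau_1)\ge\Rc(1)$, and $\E[\Rc(1)]=\int_0^\infty(1-e^{-\mu((x,\infty))})\,\dd x>0$ since $\mu$ is non-zero. Second, your hesitation about whether the per-step maximum is ``the largest atom plus boundary contributions'' resolves cleanly from \eqref{eq:def_stairs}: for $t\in(n,n+1]$ one has $\Rc(t)\le\Rc(t-1)+\max_{u\in(n,n+1]}\xi_u$, so the unit-step increment is bounded by the single largest atom in that time slice, and the ``$+1$'' in $Z_j$ handles the (possibly infinitely many) atoms below level $1$.
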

\begin{remark}
\label{rem:rho_alpha}
Proposition~\ref{prop:stairs_integrable} implies in particular that for every $\alpha>1$, the limit $\rho_\alpha = \lim_{t\to\infty} \Rc^\alpha(t)/t$ exists almost surely and in $L^1$ and is a positive and finite constant.
\end{remark}
\begin{proof}[Proof of Proposition~\ref{prop:stairs_integrable}]
Let $\xi_t$ be as in the definition of the $\mu$-stairs process. For $n \geq 0$, define the process $\xi^n_t = \xi_{t}\Ind_{t>n}$ and let $(\Rc(n,t))_{t\in\R}$ be the $\mu$-stairs process defined from $\xi^n$ as in \eqref{eq:def_stairs}. One easily checks from the definition or by \eqref{eq:Rcrep} that $\Rc(0,m) \le \Rc(0,n)+\Rc(n,m)$ for every $n\le m$. Moreover, by the hypothesis on $\mu$, $\Rc(1)$ is positive and integrable, since $\P(\Rc(1) > x) = 1-\exp(-\mu((x,\infty)))$. Kingman's subadditive ergodic theorem \cite[Theorem 6.6.1]{Durrett1996}, whose remaining conditions are readily verified, now yields almost sure and $L^1$-convergence of  $\Rc(n)/n$ to a non-negative, finite constant $\rho$, and by the monotonicity of $\Rc(t)$, this convergence also holds for $\Rc(t)/t$.

Now note that $\E[\tau_1] < \infty$  by Proposition~\ref{prop:stairs_regen}, so that $\tau_n/n$ converges almost surely to $\E[\tau_1]$ by the law of large numbers. The almost sure convergence of $\Rc(t)/t$ established above then yields almost sure convergence of $\Rc(\tau_n)/n$ to $\E[\tau_1]\rho$. By the converse to the law of large numbers, this implies that $\E[\Rc(\tau_1)]$ is finite and that $\rho = \E[\Rc(\tau_1)]/\E[\tau_1]$. Moreover, $\E[\Rc(\tau_1)]$ is positive, because the measure $\mu$ is non-zero by definition and $\Rc(\tau_1) \ge \Rc(1)$. This shows \(\rho>0\). 

We remark that we could have proven Proposition~\ref{prop:stairs_integrable} without applying Kingman's subadditive ergodic theorem; using only the regeneration structure and an argument involving Fatou's lemma and the converse to the law of large numbers to get finiteness of $\E[\Rc(\tau_1)]$.
\end{proof}

\begin{proposition}
\label{prop:stairs_convergence}
Let $\mu_N$ be a sequence of stairs measures converging to $\mu$ and denote by $\Rc^N(t)$ and $(\tau^N_n)_{n\ge0}$ the corresponding stairs process and regeneration times. Then the following holds:
\begin{enumerate}
\item The sequence of processes $(\Rc^N(t))_{t\ge0}$ converges in law to $(\Rc(t))_{t\ge0}$ w.r.t.\ the $J_1$-topology (``Skorokhod's topology''), as $N\to\infty$.
\item The sequences of random variables $\tau^N_1$ and $\Rc^N(\tau^N_1)$ converge in law to $\tau_1$ and $\Rc(\tau_1)$, respectively, as $N\to\infty$.
\item The sequence $\tau^N_1$ is uniformly integrable in $N$.
\end{enumerate}
\end{proposition}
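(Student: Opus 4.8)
The plan is to build everything from a single coupling of the underlying Poisson point processes. Fix a Poisson point process $\Pi$ on $\{(t,x):t>0,\,x>0\}$ with intensity $\dd t\otimes\nu$, where $\nu$ is any fixed reference measure dominating all the $\mu_N$ and $\mu$ in a suitable sense (for instance one can take $\nu = \sum_{k\ge 1} 2^{-k}\mu_{N_k}/\mu_{N_k}([1/k,\infty))$ plus $\mu$, or more simply work on the half-plane and use the standard thinning/change-of-intensity construction). Using $\nu$ as the dominating intensity, one can realise the PPP with intensity $\dd t\otimes\mu_N$ (resp.\ $\dd t\otimes\mu$) as a deterministic measurable function of $\Pi$ and an independent uniform field, in such a way that convergence $\mu_N\to\mu$ in the vague topology forces the corresponding point configurations to converge a.s.\ in the sense that on every compact box $[0,T]\times[\delta,\infty)$ the atoms converge (with multiplicity) as $N\to\infty$. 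This is where the condition $\mu([a,\infty))<\infty$ is used: it makes each such box carry finitely many atoms a.s., and vague convergence then pins down their locations. The only subtlety is atoms escaping to $x=0$ or $x=\infty$, which we handle by the uniform tail bound in Proposition~\ref{prop:stairs_regen} together with $\mu_N([\delta,\infty))\to\mu([\delta,\infty))$ for continuity points $\delta$.

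For part~(1), I would first argue that the map $(\xi_t)_{t\ge 0}\mapsto(\Rc(t))_{t\ge 0}$ is continuous (at configurations with no atom exactly at an integer translate of a record time, which happens a.s.) for the topology of the previous paragraph, using the finite-horizon representation \eqref{eq:Rcrep}: on $[0,T]$ the value $\Rc(t)$ depends only on finitely many atoms above a level $\delta$ that can be taken $\mu$-generic, and small perturbations of those atom positions produce small $J_1$-perturbations of the resulting càdlàg step function. Combined with the a.s.\ convergence of the point configurations from the coupling, this gives $\Rc^N\to\Rc$ a.s.\ in $J_1$ on every $[0,T]$, hence in law on $[0,\infty)$.

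For part~(2), the regeneration time $\tau_1^N$ is a measurable functional of the point configuration that is a.s.\ continuous at $\mu$-generic configurations — one reads it off from \eqref{eq:Rcrep} as the first time $>1$ at which no admissible chain of atoms improves on the current maximum — so the continuous mapping theorem applied to the coupling yields $\tau_1^N\to\tau_1$ and $\Rc^N(\tau_1^N)\to\Rc(\tau_1)$ a.s., a fortiori in law. Part~(3) follows from Proposition~\ref{prop:stairs_regen}: since the constant $C$ there can be chosen continuous in $\mu$, for $N$ large it is bounded below by some $C_0>0$ uniformly, so $\P(\tau_1^N>t)\le C_0^{-1}e^{-C_0 t}$ for all large $N$ and all $t$, which gives a uniform exponential tail and hence uniform integrability (the finitely many remaining small $N$ are each integrable by the same proposition).

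The main obstacle is making the coupling of the Poisson processes genuinely clean, i.e.\ choosing a dominating intensity and a construction for which vague convergence of the intensities translates into almost sure convergence of the point configurations on compact boxes bounded away from $x=0$; once that is in place, parts (1) and (2) are applications of the continuous mapping theorem via \eqref{eq:Rcrep}, and part~(3) is just the continuity of the constant in Proposition~\ref{prop:stairs_regen}. A secondary technical point is verifying that the relevant functionals ($\Rc$ on a finite horizon, and $\tau_1$) are continuous at almost every configuration, which amounts to excluding the measure-zero set of configurations with ties — an atom landing exactly at distance $1$ from another, or a record being matched rather than strictly exceeded.
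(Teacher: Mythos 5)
Your overall architecture matches the paper's: build one coupling of the driving point processes so that the configurations converge almost surely, then read off parts (1) and (2) by continuity of the maps $\xi\mapsto\Rc$ and $\xi\mapsto(\tau_1,\Rc(\tau_1))$ via \eqref{eq:Rcrep}, and get part (3) from the uniform exponential tail in Proposition~\ref{prop:stairs_regen}. Part (3) and the continuity/no-ties discussion are fine as you state them.

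The gap is in the coupling itself, which you correctly flag as the main obstacle but do not actually resolve. A thinning/change-of-intensity construction from a dominating measure $\nu$ realises each $\mu_N$-PPP by accepting an atom with mark $u$ when $u\le \frac{d\mu_N}{d\nu}(x)$ (up to normalisation); for the accepted configurations to converge almost surely you would need the densities $\frac{d\mu_N}{d\nu}$ to converge $\nu$-a.e.\ to $\frac{d\mu}{d\nu}$. Vague convergence $\mu_N\to\mu$ gives you convergence of $\mu_N([x,\infty))$ at continuity points, but says nothing about pointwise convergence of Radon--Nikodym derivatives (the $\mu_N$ need not even be absolutely continuous with respect to one another), so the construction as sketched does not produce a.s.\ convergent atom configurations. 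The paper avoids this entirely with a monotone quantile coupling on the marks: after assuming w.l.o.g.\ that $\mu$ is atomless with full support and that $\mu_N((0,\infty))\le\mu((0,\infty))$, it sets $F(x)=\mu([x,\infty))$, $F_N(x)=\mu_N([x,\infty))$ and $\xi^N_t=f_N(\xi_t)$ with $f_N=F_N^{-1}\circ F$. Then each $\xi^N$ has the right law, no atoms appear or disappear, and vague convergence translates into $f_N\to\mathrm{id}$ pointwise with each $f_N$ non-decreasing, which yields $\sup_{t\le T}|\Rc^N(t)-\Rc(t)|\to0$ a.s.\ (stronger than $J_1$) together with $\tau^N_1\to\tau_1$ and $\Rc^N(\tau^N_1)\to\Rc(\tau_1)$. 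If you replace your dominating-intensity construction with this quantile transform, the rest of your argument goes through.
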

\begin{proof}
We can assume w.l.o.g.\ that $\mu$ has no atoms and has full support in $(0,\infty)$. Moreover, we can assume that $\mu_N((0,\infty)) \le \mu((0,\infty))$ for every $N$, otherwise we truncate $\mu_N$ near the origin. Let $\xi^N_t$ and $\xi_t$ be the processes used to construct $\Rc^N$ and $\Rc$. Define the functions $F,F_N:\R_+^*\to\R_+$ by $F(x) = \mu([x,\infty))$ and $F_N(x) = \mu_N([x,\infty))$ and let $F_N^{-1}(x) = \inf\{y\ge0:F_N(y)\le x\}$ be the generalised inverse of $F_N$. Defining the function $f_N := F_N^{-1}\circ F$, we can then couple the processes $\xi^N_t$ and $\xi_t$ by setting $\xi^N_t = f_N(\xi_t)$ (here we implicitly used the above-mentioned assumptions on $\mu$). 
Note that for every $N\ge1$, the function $f_N$ is non-decreasing and the sequence $(f_N)_{N\ge 1}$ converges pointwise to the identity function on $[0,\infty)$.
Using \eqref{eq:Rcrep}, we can now show that almost surely, 
\begin{equation*}
\forall T\ge0: \lim_{N\to\infty}\sup_{0\le t\le T}|\Rc^N(t)-\Rc(t)| = 0,\ \lim_{N \to +\infty} \tau^N_1 = \tau_1,    \     \lim_{N \to +\infty}    \Rc^N(\tau^N_1) = \Rc(\tau_1).
\end{equation*}
This proves the first two claims. The third follows from Proposition~\ref{prop:stairs_regen}.
\end{proof}

For the next results, we suppose that $\mu_N$ is a sequence of stairs measures given by
\[
\mu_N([x,\infty)) = -\gamma_N  \log(1-h(c_Nx)^{-1}),
\] where $(\gamma_N)_{N\ge1}$ is a sequence such that $\gamma_N/(2N\log_2N)\to 1$ as $N\to\infty$. Here, $h$ is the function given in the introduction, in particular, $h(x)$ varies regularly at infinity with index $\alpha>0$. The sequence $\mu_N$ therefore converges to $\mu_\alpha$.

The following theorem gives the long-time behaviour of the $\mu_N$-stairs process. It will be important for the proof of Theorem~\ref{th:main}.

\begin{theorem}
\label{th:stairs}
We distinguish between the following cases:
\begin{itemize}
\item $\alpha > 1$: The limits $\rho_N = \lim_{t\to\infty} \Rc^N(t)/t$ and $\rho_\alpha = \lim_{t\to\infty} \Rc^\alpha(t)/t$ exist almost surely and in \(L^{1}\). Moreover, $\rho_N\to\rho_\alpha$ as $N\to\infty$.
\item $\alpha = 1$, $\E[X]<\infty$: The limit $\rho_N = \lim_{t\to\infty} \Rc^N(t)/t$ exists almost surely and in \(L^{1}\) and satisfies $\rho_N \sim \int_1^\infty h(c_N)/h(c_Nx)\,\dd x,$ as $N\to\infty$.
\item $\alpha = 1$, $\E[X] = \infty$: Set $b^N_t = \int_1^{h^{-1}(t)}h(c_N)/h(c_N x)\,\dd x$. Then, for every $N$,
\[
\Rc^N(t)/tb^N_t\to 1,\quad\text{in probability as $t\to\infty$.}
\]
\item $0 < \alpha < 1$:  Let  $W_\alpha$ be a random variable with Laplace transform given by \eqref{eq:Walpha}. Then,
\[
\lim_{N\to\infty} \limsup_{t\to\infty} d\left(\mathcal L\left(\frac{c_N}{h(c_N)^{1/\alpha}}\frac{\Rc^N(t)}{h^{-1}(t)}\right),\mathcal L(W_\alpha)\right) = 0
\]
\end{itemize}

\end{theorem}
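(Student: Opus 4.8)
The plan is to exploit the regeneration structure $(\tau^N_n)$ developed for the stairs process, which reduces the long-time behaviour of $\Rc^N(t)$ to a renewal-reward type analysis. By Proposition~\ref{prop:stairs_regen}, the inter-regeneration times $\tau^N_n-\tau^N_{n-1}$ have exponential tails uniformly in $N$, so $\tau^N_n/n \to \E[\tau^N_1]$ almost surely and the time-change from $n$ to $t$ is harmless in every regime; the crux is understanding the increments $Y^N_n := \Rc^N(\tau^N_n)-\Rc^N(\tau^N_{n-1})$, which are i.i.d., and identifying their tail. The key computation is the tail of a single increment $Y^N_1$: it should behave, up to constants coming from $\mu_N$, like the tail of $X$ under the appropriate rescaling, i.e.\ $\P(Y^N_1 > x)$ is regularly varying of index $-\alpha$ with a scale governed by $c_N$ and $h$. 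Concretely, a regeneration increment is dominated by (and of the same order as) the largest atom contributing to it, and the intensity of atoms above level $c_N x$ is $\mu_N([x,\infty)) = -\gamma_N\log(1-h(c_Nx)^{-1}) \sim \gamma_N/h(c_Nx) \sim 2N\log_2N\cdot h(c_N)/h(c_Nx)$ after using $h(c_N)\sim 2N\log_2N$; so the $x$-tail of $Y^N_1/c_N$ is comparable to $h(c_N)/h(c_Nx)$, which is exactly $x^{-\alpha}(1+o(1))$ when $\alpha$ is such that $h$ is nice, and more generally the slowly varying part must be tracked.

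With the tail of the i.i.d.\ increments $Y^N_n$ in hand, each of the four cases is a citation of a classical limit theorem for sums of i.i.d.\ nonnegative variables with regularly varying tails, applied to $\sum_{k\le n} Y^N_k$ and then transferred back to $\Rc^N(t)$ via the renewal time-change.

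\begin{itemize}
\item For $\alpha>1$: $\E[Y^N_1]<\infty$, so Proposition~\ref{prop:stairs_integrable} already gives $\rho_N = \E[\Rc^N(\tau^N_1)]/\E[\tau^N_1]$; convergence $\rho_N\to\rho_\alpha$ follows from Proposition~\ref{prop:stairs_convergence}(2)--(3) (convergence in law of $\tau^N_1$ and $\Rc^N(\tau^N_1)$ together with uniform integrability of both, the latter for $\Rc^N(\tau^N_1)$ needing the uniform tail bound just discussed plus $\alpha>1$).
\item For $\alpha=1$, $\E[X]<\infty$: $\E[Y^N_1]<\infty$ for each fixed $N$, so again $\rho_N = \E[\Rc^N(\tau^N_1)]/\E[\tau^N_1]$; the asymptotic $\rho_N\sim\int_1^\infty h(c_N)/h(c_Nx)\,\dd x$ comes from computing $\E[\Rc^N(\tau^N_1)]$ — the dominant contribution is a single large atom, whose expected size is $\sim c_N\int_1^\infty h(c_N)/h(c_Nx)\,\dd x$ — while $\E[\tau^N_1]\to\E[\tau_1]$ and, crucially, $\int_1^\infty h(c_N)/h(c_Nx)\,\dd x\to\infty$ so that the corrections from regeneration and from truncation are lower order.
\item For $\alpha=1$, $\E[X]=\infty$: here $\E[Y^N_1]=\infty$ but the truncated mean $b^N_t$ (up to the natural cutoff level) is slowly varying, and the weak law of large numbers for i.i.d.\ variables in the domain of attraction of a Cauchy/one-sided stable law (e.g.\ the classical truncated-mean WLLN, see Feller) gives $S_n/(n b^N_{\cdot})\to1$ in probability; transferring through the renewal time-change and matching the normalisation to $t b^N_t$ yields the claim.
\item For $0<\alpha<1$: $Y^N_n$ lies in the domain of attraction of the positive $\alpha$-stable law, so $S_n$ normalised by $n^{1/\alpha}$ times the appropriate slowly varying function converges to $W_\alpha$; evaluating the normalising constant using the tail asymptotics of $Y^N_1$ and the renewal time-change (which contributes only the deterministic factor $\E[\tau^N_1]^{-1/\alpha}$, converging as $N\to\infty$) gives convergence of $\mathcal L(c_N h(c_N)^{-1/\alpha}\Rc^N(t)/h^{-1}(t))$; taking first $t\to\infty$ then $N\to\infty$ and absorbing the vanishing discrepancies between $\mu_N$ and $\mu_\alpha$ produces $\mathcal L(W_\alpha)$ in the limit.
\end{itemize}

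The main obstacle is the tail estimate for the regeneration increment $Y^N_1$, uniformly in $N$, and in particular controlling the slowly varying correction in $h(c_Nx)/h(c_N)$ uniformly for $x$ in the relevant range. One needs both an upper bound (a regeneration block cannot produce a macroscopic displacement without a correspondingly large atom, which requires quantifying how the self-interaction term $\Rc^N(t-1)$ in \eqref{eq:def_stairs} can only stack up $O(1)$ atoms before a regeneration occurs) and a matching lower bound (a single large atom landing in an otherwise quiet block does force a displacement of that size and then a regeneration shortly after). Uniform integrability statements in the $\alpha\ge1$ cases and the interchange of the limits $t\to\infty$ and $N\to\infty$ in the $\alpha<1$ and $\alpha=1,\E[X]=\infty$ cases (which is why the theorem is stated with $\limsup_{t\to\infty}$ inside $\lim_{N\to\infty}$ and a Prokhorov distance rather than a clean double limit) both hinge on having this tail control with constants that are continuous — hence locally bounded — in $\mu$, exactly as provided by the ``continuity in $\mu$'' clauses of Propositions~\ref{prop:stairs_regen} and~\ref{prop:stairs_convergence}.
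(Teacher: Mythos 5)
Your plan is essentially the paper's own proof: a regeneration decomposition, a tail estimate for the regeneration increment $\Rc^N(\tau^N_1)$ of the form $\P(\Rc^N(\tau^N_1)>x)\sim\E[\tau^N_1]\mu_N([x,\infty))$ uniformly in $N$ (the paper's Proposition~\ref{prop:Rc_tau}, proved exactly via the upper/lower bound strategy you describe, with the increment asymptotically equal to its last large atom), followed by the classical random-walk limit theorems in each of the four regimes. The only blemish is the stray division by $c_N$ in your tail heuristic ($\Rc^N$ already lives on the rescaled space), which does not affect the argument.
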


A few remarks on Theorem~\ref{th:stairs}: In the case $\alpha > 1$, its proof is almost immediate from Propositions~\ref{prop:stairs_integrable} and \ref{prop:stairs_convergence} and Remark~\ref{rem:rho_alpha}. Indeed, since $\rho_N = \E[\Rc^N(\tau^N_1)]/\E[\tau^N_1]$ and $\rho_\alpha = \E[\Rc^\alpha(\tau^\alpha_1)]/\E[\tau^\alpha_1]$, it remains to show that the sequence of random variables $\Rc^N(\tau^N_1)$ is uniformly integrable in $N$, which can easily be done through fractional moment estimates using H\"older's inequality and Proposition~\ref{prop:stairs_regen}. It will also follow directly from Proposition~\ref{prop:Rc_tau} below. This proposition yields a precise estimate of the tail of $\Rc^N(\tau^N_1)$, which is the key to proving Theorem~\ref{th:stairs} in the more delicate case $\alpha \le 1$. Indeed, armed with Proposition~\ref{prop:Rc_tau}, the theorem directly follows from classic results on random walks applied to $(\Rc^N(\tau^N_n))_{n\ge 0}$.

We furthermore remark that the uniformity in $N$ in the statement of Proposition~\ref{prop:Rc_tau} is only needed in the case $\E[X]<\infty$. In the case $\E[X] = \infty$, estimates not uniform in $N$ would suffice.

\begin{proposition}
\label{prop:Rc_tau}
For every $\ep>0$, there exists $x_0=x_0(\ep)$, such that for all $N$,
\[
\sup_{x\ge x_0}\left|\frac{\P(\Rc^N(\tau^N_1) > x)}{\E[\tau^N_1]\mu_N([x,\infty))}-1\right| < \ep.
\]
\end{proposition}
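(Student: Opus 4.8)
The plan is to understand $\Rc^N(\tau^N_1)$ as a record maximum of a Poisson-paced record process run for a geometrically-bounded number of steps, and to show that its tail is governed by a single large jump. Concretely, recall that $\Rc^N(\tau^N_1)$ is built from the atoms of a Poisson point process with intensity $\dd t\otimes\mu_N$ on $[0,\tau^N_1)\times(0,\infty)$ via the record rule \eqref{eq:Rcrep}, and that $\tau^N_1$ has exponential tails uniformly in $N$ by Proposition~\ref{prop:stairs_regen}. Writing $\Lambda_N(x) = \mu_N([x,\infty))$ for the tail of the jump law, I expect the heuristic ``$\Rc^N(\tau^N_1)>x$ iff some atom of size $>x(1-o(1))$ occurs before $\tau^N_1$'' to be the mechanism. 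Since the expected number of atoms of size $>x$ before time $\tau^N_1$ is $\E[\tau^N_1]\Lambda_N(x)$ (by optional stopping / Wald for the Poisson process, as $\tau^N_1$ is a stopping time for the driving process), the leading-order answer $\P(\Rc^N(\tau^N_1)>x)\sim \E[\tau^N_1]\Lambda_N(x)$ is what we want.

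First I would establish the lower bound. If a single atom of size $>x$ appears at some time $s<\tau^N_1$, then by \eqref{eq:Rcrep} (taking $k=1$, or appending this atom to a chain) we get $\Rc^N(\tau^N_1)\ge \xi_s > x$. So $\P(\Rc^N(\tau^N_1)>x)\ge \P(\exists s<\tau^N_1: \xi_s > x)$, and the latter probability is, by the exponential formula for Poisson processes conditioned on the stopping time and then optional stopping, equal to $\E[1-e^{-\tau^N_1\Lambda_N(x)}] = \E[\tau^N_1]\Lambda_N(x)(1+o(1))$ uniformly in $N$, provided $\Lambda_N(x)\to 0$ uniformly as $x\to\infty$ — which holds since $\mu_N\to\mu_\alpha$ and all tails are dominated near infinity (one should check $\sup_N \Lambda_N(x)\to0$; this follows from the explicit form $\Lambda_N(x) = -\gamma_N\log(1-h(c_Nx)^{-1}) \le 2\gamma_N h(c_Nx)^{-1}$ for large $x$, together with $\gamma_N h(c_N x)^{-1}\to x^{-\alpha}$). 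For the lower bound I also need $\E[\tau^N_1]$ bounded away from $0$ and $\infty$ uniformly in $N$, which is immediate from Propositions~\ref{prop:stairs_regen} and \ref{prop:stairs_convergence}(3).

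The upper bound is the real work. I would split the event $\{\Rc^N(\tau^N_1)>x\}$ according to whether it is ``caused'' by one big atom or by an accumulation of moderate atoms. Fix a small $\delta>0$. On the complement of $\{\exists s<\tau^N_1:\xi_s>(1-\delta)x\}$, every atom before $\tau^N_1$ has size $\le(1-\delta)x$; then by \eqref{eq:Rcrep}, $\Rc^N(\tau^N_1)>x$ forces a chain of $k\ge 2$ atoms, at time-gaps $\ge1$, summing to more than $x$, all inside $[0,\tau^N_1)$; since the gaps are $\ge1$ this requires $\tau^N_1\ge k\ge 2$ and at least two atoms each of size in some range whose total exceeds $x$. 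The probability of having two atoms of size $>\delta x/ \tau^N_1$ before time $\tau^N_1$, summed over the geometric-type distribution of $\tau^N_1$, should be $O((\Lambda_N(cx))^2 \cdot \text{polynomial in }x)$ after integrating against the exponential tail of $\tau^N_1$; because $\Lambda_N$ is regularly varying with index $-\alpha$ (uniformly, by the convergence to $\mu_\alpha$ and a Potter-bounds argument), $(\Lambda_N(cx))^2$ is $o(\Lambda_N(x))$, and I would need to be a little careful that the polynomial-in-$x$ loss (from summing over the number of atoms and from the $\tau^N_1$-dependence in the threshold) is absorbed — this is exactly where regular variation with a positive index $\alpha$ and the exponential tail of $\tau^N_1$ combine to save the day. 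Finally, $\P(\exists s<\tau^N_1:\xi_s>(1-\delta)x)\le \E[\tau^N_1]\Lambda_N((1-\delta)x)(1+o(1))$, and since $\Lambda_N((1-\delta)x)/\Lambda_N(x)\to(1-\delta)^{-\alpha}$ uniformly in $N$ (Potter bounds applied uniformly along the convergent family $\mu_N\to\mu_\alpha$), sending $\delta\to0$ after $x\to\infty$ closes the gap between the constants $(1-\delta)^{-\alpha}$ and $1$. The main obstacle I anticipate is making the ``accumulation of moderate atoms is negligible'' bound genuinely uniform in $N$: one must control $\Lambda_N$ by a single regularly varying majorant near infinity and invoke uniform Potter bounds, and one must handle the coupling between the random horizon $\tau^N_1$ and the thresholds cleanly, presumably by first conditioning on $\{\tau^N_1\le t_0\}$ for a large constant $t_0$ (cheap, by the uniform exponential tail) and working on that event where all thresholds are bounded below by $\delta x/t_0$.
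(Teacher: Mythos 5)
Your mechanism (one big atom dominates; accumulations of moderate atoms are negligible) is the right one, and your upper bound is a genuinely different route from the paper's: the paper first isolates the last jump $\Delta\Rc^N(\tau^N_1-1)$ (Lemma~\ref{lem:Delta_Rc}) and then controls the remainder $\Rc^N((\tau^N_1-1)-)$ via a two-case split plus a large-deviation lemma of Durrett for sums of truncated maxima, whereas you observe directly from \eqref{eq:Rcrep} that if no atom before $\tau^N_1$ exceeds $(1-\delta)x$, any chain realising $\Rc^N(\tau^N_1)>x$ has $k\ge2$ terms with gaps $\ge1$, hence $k\le \tau^N_1+1$ and hence contains \emph{two} atoms of size at least $\delta x/\tau^N_1$; on $\{\tau^N_1\le L_x\}$ the expected number of such pairs is $O\bigl((L_x\,\mu_N([\delta x/L_x,\infty)))^2\bigr)$, which is $o(\mu_N([x,\infty)))$ because $\alpha>0$ swallows the polylogarithmic loss. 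That is correct and dispenses with Durrett's lemma. However, your lower bound rests on the identity $\P(\exists s<\tau^N_1:\xi_s>x)=\E[1-e^{-\tau^N_1\Lambda_N(x)}]$, which is unjustified and in general false: $\tau^N_1$ is not independent of the large atoms — on the contrary, an atom of size $>x$ at time $t$ typically forces $\tau^N_1\le t+2$ (this is exactly how Proposition~\ref{prop:stairs_regen} is proved), so the horizon and the thinned process of big atoms are strongly dependent, and optional stopping of the exponential martingale gives $\E[e^{\tau^N_1\Lambda_N(x)}\Ind_{\{\text{no big atom}\}}]=1$, not your formula. The repair is the one the paper uses: the projection theorem gives the exact first moment $\E[\#\{t\le\tau^N_1:\xi_t>x\}]=\Lambda_N(x)\E[\tau^N_1]$ (since $\{\tau^N_1\ge t\}$ is measurable with respect to the past of $t$), which already yields the upper bound by Markov, and the matching lower bound on the probability follows by subtracting the expected number of pairs of big atoms before $\tau^N_1$ — again $o(\Lambda_N(x))$ after truncating $\tau^N_1$.

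The second concrete defect is your closing suggestion to condition on $\{\tau^N_1\le t_0\}$ for a \emph{large constant} $t_0$ and to call the complement cheap. For fixed $t_0$, $\P(\tau^N_1>t_0)$ is a fixed positive number, while every error term must be $o(\mu_N([x,\infty)))$ with $\mu_N([x,\infty))\to0$; a constant truncation is therefore useless. You need $t_0=t_0(x)\to\infty$, e.g.\ $L_x=\lceil C\log x\rceil$ with $C$ chosen (uniformly in $N$, using the continuity in $\mu$ of the constant in Proposition~\ref{prop:stairs_regen} together with $\mu_N\to\mu_\alpha$) so that $\P(\tau^N_1>L_x)\le\mu_N([x,\infty))^2$ — exactly the paper's choice. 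With this the thresholds in your pair bound become $\delta x/L_x$ rather than $\delta x/t_0$, which still works as noted above. Your treatment of uniformity in $N$ (dominating $\Lambda_N$ via the explicit formula and Potter's bounds, $\E[\tau^N_1]\in[1,C]$ uniformly, uniform ratio $\Lambda_N((1-\delta)x)/\Lambda_N(x)\to(1-\delta)^{-\alpha}$ before sending $\delta\to0$) is on the right track and matches what the paper needs for the case $\alpha=1$, $\E[X]<\infty$.
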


Write $\Delta\Rc^N(t) = \Rc^N(t)-\Rc^N(t-)$ for the jump of $\Rc^N$ at time $t$. As part of the proof of Proposition~\ref{prop:Rc_tau}, we will show that if $\Rc^N(\tau^N_1)$ is large, then it is approximately equal to $\Delta\Rc^N(\tau^N_1-1)$. We therefore first prove the following lemma: 

\begin{lemma}
\label{lem:Delta_Rc}
For every $\ep>0$, there exists $x_0=x_0(\ep)$, such that for all $N$,
\[
\sup_{x\ge x_0}\left|\frac{\P(\Delta\Rc^N(\tau^N_1-1) > x)}{\E[\tau^N_1]\mu_N([x,\infty))}-1\right| < \ep.
\]
\end{lemma}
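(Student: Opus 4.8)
The plan is to decompose the event according to the structure of the regeneration block $[0,\tau^N_1]$, and to show that a large value of the jump $\Delta\Rc^N(\tau^N_1-1)$ is essentially caused by one single large atom of the Poisson point process, whose intensity of the right tail is exactly $\mu_N([x,\infty))$. Recall that $\tau^N_1$ is the first time $t>1$ at which $\Rc^N(t)=\Rc^N(t-1)$, i.e.\ the first time the interval of length $1$ to its left contains no Poisson atom above the current record level; so the atom realizing the jump $\Delta\Rc^N(\tau^N_1-1)$ sits at some time $s=\tau^N_1-1-u$ with $u\in[0,1)$ and $\Delta\Rc^N(s) = \xi_s$ (the jump of $\Rc^N$ at $s$ equals the height of the atom at $s$ minus what $\Rc^N$ already was, but for the \emph{last} jump before a regeneration the quantity $\Delta\Rc^N(\tau^N_1-1)$ is exactly the increment produced by that atom over $\Rc^N((\tau^N_1-1)-)$).

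First I would write, for fixed $n$, the decomposition $\P(\Delta\Rc^N(\tau^N_1-1)>x) = \sum_{n\ge1}\P(\tau^N_1\in(n,n+1],\ \Delta\Rc^N(\tau^N_1-1)>x)$, and on each such event identify the unique atom $(s,\xi_s)$ with $s\in(n-1,n]$ that gives the jump. For the \emph{upper bound}, I would simply drop the constraint coming from the precise location of the regeneration except for the time window, bounding $\P(\tau^N_1\in(n,n+1],\ \Delta\Rc^N(\tau^N_1-1)>x)$ by $\P(\tau^N_1>n)$ times the probability that an atom of height exceeding roughly $x$ falls in a unit time interval; by the regeneration property the event $\{\tau^N_1>n\}$ has probability summing to $\E[\tau^N_1]$, and an atom of height $>y$ appears in a unit interval with probability $1-e^{-\mu_N([y,\infty))}\le \mu_N([y,\infty))$. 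This already gives $\P(\Delta\Rc^N(\tau^N_1-1)>x)\le (1+o(1))\E[\tau^N_1]\mu_N([x,\infty))$ after using that the "background" contribution $\Rc^N((\tau^N_1-1)-)$ to the jump height is negligible compared to $x$ when $x$ is large — which requires an a priori tail bound on $\Rc^N((\tau^N_1-1)-)$, and this is where Proposition~\ref{prop:stairs_regen} enters: $\Rc^N((\tau^N_1-1)-)$ is stochastically dominated by a geometric-type sum of block maxima, hence has a tail lighter (by a polynomial factor) than $\mu_N([x,\infty))$ if $\alpha>0$; more carefully, on $\{\Delta\Rc^N(\tau^N_1-1)>x\}$ either the realizing atom already has height $>x/2$ (probability $\lesssim \mu_N([x/2,\infty))$) or $\Rc^N((\tau^N_1-1)-)>x/2$, and by regular variation $\mu_N([x/2,\infty))/\mu_N([x,\infty))\to 2^\alpha$, so one wants the second term to be $o(\mu_N([x,\infty)))$ uniformly in $N$ — this is the delicate uniformity point.

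For the \emph{lower bound}, I would restrict to the event that in exactly one unit time interval $(n-1,n]$ there is a single atom of height $>x$ placed appropriately (say in the right portion of the interval so that it lies within distance $1$ to the left of a regeneration), that this atom is the maximal one so far, that no atom of the block is large enough to spoil the picture, and that a regeneration occurs shortly afterwards; summing over $n$ and using independence across disjoint blocks together with $\sum_n\P(\text{no regeneration before block }n) \ge (1-o(1))\E[\tau^N_1]$ recovers the matching lower bound $(1-o(1))\E[\tau^N_1]\mu_N([x,\infty))$. The bookkeeping here is that, conditionally on a large atom in block $n$, the process $\Rc^N$ regenerates within $O(1)$ steps with probability bounded below (Proposition~\ref{prop:stairs_regen} applied to the shifted process), so the large atom is indeed $\Delta\Rc^N(\tau^N_1-1)$ with probability $1-o(1)$ as $x\to\infty$.

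The main obstacle will be making all error terms \emph{uniform in $N$}. The family $\mu_N$ converges to $\mu_\alpha$ but is only regularly varying "in the limit"; one must use the uniform convergence theorem for regularly varying functions (Potter bounds) applied to $h$, together with the continuity in $\mu$ of the constant $C$ in Proposition~\ref{prop:stairs_regen}, to get that the stochastic domination of $\Rc^N((\tau^N_1-1)-)$ by a light-tailed random variable holds uniformly, and that ratios like $\mu_N([ax,\infty))/\mu_N([x,\infty))$ are uniformly close to $a^{-\alpha}$ for $x$ large. Once uniformity of these ingredients is secured, the argument above yields $x_0=x_0(\ep)$ independent of $N$, which is exactly the content of the lemma; Proposition~\ref{prop:Rc_tau} then follows by showing, as announced in the text, that $\Rc^N(\tau^N_1)$ and $\Delta\Rc^N(\tau^N_1-1)$ differ by a quantity of smaller order, again via the light tail of $\Rc^N((\tau^N_1-1)-)$ and of $\Rc^N(\tau^N_1)-\Rc^N(\tau^N_1-1)$ (the contribution of atoms strictly after time $\tau^N_1-1$, of which there are none above the record by definition of $\tau^N_1$, so this last difference is in fact zero and only the "before" part matters).
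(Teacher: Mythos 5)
Your overall strategy --- a single large Poisson atom at time $\tau^N_1-1$, counted against the expected number of opportunities $\E[\tau^N_1]$ --- is the same as the paper's, but two steps as you describe them would not deliver the sharp constant, which is the whole content of the lemma. First, your upper bound rests on a misreading of the structure of the jump: since $\Rc^N(t)=\max\bigl(\Rc^N(t-),\,\Rc^N(t-1)+\xi_t\bigr)$ and $\Rc^N(t-)\ge\Rc^N(t-1)$, one always has $\Delta\Rc^N(t)\le\xi_t$; the ``background'' $\Rc^N(t-)-\Rc^N(t-1)$ can only \emph{decrease} the jump, never inflate it. Hence $\{\Delta\Rc^N(\tau^N_1-1)>x\}$ already forces the realizing atom to exceed $x$ exactly (not ``roughly $x$''), and no tail bound on $\Rc^N((\tau^N_1-1)-)$ is needed here at all. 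Your ``more careful'' split at level $x/2$ would, even with the second term controlled, leave a main term $\mu_N([x/2,\infty))\approx 2^\alpha\mu_N([x,\infty))$, which fails the lemma. (A split of that kind, at level $(1-\ep)x$, is what the paper uses for Proposition~\ref{prop:Rc_tau}; and there your proposed control of $\Rc^N((\tau^N_1-1)-)$ by ``a geometric-type sum of block maxima, hence a polynomially lighter tail'' is wrong: by the single-big-jump principle such a sum has a tail of the \emph{same} order as $\mu_N([x,\infty))$. The pre-jump record actually has a lighter tail because one large atom strictly before $\tau^N_1-1$ forces a regeneration within one time unit unless a \emph{second} large atom occurs, giving a squared tail, plus a Durrett-type estimate for sums with no large summand.)

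Second, the counting. Summing $\P(\tau^N_1>n)$ over unit blocks gives $\E[\lceil\tau^N_1\rceil]$, which differs from $\E[\tau^N_1]$ by an additive constant up to $1$; since $\E[\tau^N_1]$ need not be large, this is not a $(1\pm\ep)$ relative error and cannot be absorbed. Moreover $\{\tau^N_1>n\}$ is \emph{not} independent of the atoms in $(n-1,n]$ (a large atom there affects whether a regeneration occurs by time $n$), so the factorization you invoke is unjustified as stated. The paper resolves both issues at once with the projection (Palm/compensator) theorem for Poisson processes: writing the probability as $\E\bigl[\sum_{(t,\xi_t)\in\Pc}\Ind_{\tau^N_1\ge t,\,\xi_t>x}\bigr]$ and using that $\{\tau^N_1\ge t\}=\{\tau^N_1<t\}^c$ is measurable with respect to the pre-$t$ process, it gets exactly $(1-e^{-\mu_N([x,\infty))})\int_0^\infty\P(\tau^N_1\ge t)\,\dd t=(1-e^{-\mu_N([x,\infty))})\E[\tau^N_1]$. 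You would need this, or a mesh-refinement argument, to land on the right constant. Your lower-bound sketch (large record atom, no competing large atoms, regeneration one unit later with probability $1-o(1)$) and your remarks on uniformity in $N$ via Potter's bounds, $\E[\tau^N_1]\ge1$ and the continuity in Proposition~\ref{prop:stairs_regen} are in the right spirit and match the paper's argument.
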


\begin{proof}
For better readability, write $\mu = \mu_N$, $\Rc = \Rc^N$, $\tau_1 = \tau^N_1$, $\Delta\Rc(t) = \Rc(t)-\Rc(t-)$, $\xi_t = \xi^N_t$.
We further set  $M_{t} = \max_{s\in(t-1,t]}\xi_s$ and $M_{t-} = \max_{s\in(t-1,t)}\xi_s$. 

Bounding $\P(\Delta\Rc(\tau_1-1) > x)$ from above is easy: Let $\Pc$ be the Poisson point process used to construct $\Rc$. We have
\[
\P(\Delta \Rc(\tau_1-1)>x) = \E\big[\sum_{(t,\xi_t)\in\Pc}\Ind_{\tau_1 = t+1,\,\Delta\Rc(t) > x}\Big] \le \E\big[\sum_{(t,\xi_t)\in\Pc}\Ind_{\tau_1 \ge t,\,\xi_t > x}\Big].
\]
Now, since $\tau_1$ is a stopping time for $\Rc$, the event $\{\tau_1 \ge t\} = \{\tau_1 < t\}^c$ is measurable w.r.t.\ the $\sigma$-field generated by $\Pc\cap \{[0,t)\times\R_+\}$ for every $t\ge0$. By the projection theorem for Poisson processes \cite[Theorem~VIII.T3]{Bremaud1981}, the previous equation now yields
\[
\P(\Delta \Rc(\tau_1-1)>x) \le (1-e^{-\mu([x,\infty))}) \int_0^\infty \P(\tau_1 \ge t)\,\dd t = (1-e^{-\mu([x,\infty))})\E[\tau_1],
\]
which proves the upper bound of  $\P(\Delta\Rc(\tau_1-1) > x)$.

As for the lower bound, we note that
for every $t\ge 0$, on the event $\{\Delta\Rc(t) > 0\}$, we have $\tau_1 \ge t+1$ iff $\tau_1 \ge t$. This gives for every $\ep>0$, $x>0$ and $t\ge 0$,
\begin{align*}
\{\tau_1 = t+1,\,\Delta\Rc(t) > x\} & \supset \{\tau_1 \ge t,\,M_{t+1}\le x,\,\Delta\Rc(t) > x\}\\
 &\supset \{\tau_1\ge t,M_{t+1}\le x,\,\,\xi_t>(1+\ep)x,\,M_{t-} \le \ep x\}.
\end{align*}
By the independence properties of the Poisson process and the above-mentioned projection theorem, this gives,
\begin{align*}
\P(\Delta \Rc(\tau_1-1)>x) &\ge \E\big[\sum_{(t,\xi_t)\in\Pc}\Ind_{\tau_1 \ge t,\,M_{t+1}\le x,\,\xi_t>(1+\ep)x,\,M_{t-} \le \ep x}\Big] \\
&\ge \P(M_1\le x)\E\big[\sum_{(t,\xi_t)\in\Pc}\Ind_{\tau_1 \ge t,\,\xi_t>(1+\ep)x}-\Ind_{\tau_1 \ge t-1,\,\xi_t>(1+\ep)x,\,M_{t-} > \ep x}\Big] \\
&=\P(M_1\le x)(1-e^{-\mu([(1+\ep)x,\infty))})(\E[\tau_1] - (1+\E[\tau_1])\P(M_1 > \ep x)).
\end{align*}
(in the second line, we used the fact that $\tau_1\ge t$ trivially implies $\tau_1\ge t-1$). Now, since $\mu_N$ converges to $\mu_\alpha$, the variables $M_1 = M^N_1$ converge in law as well. Together with the fact that $\E[\tau_1]\ge 1$ by definition, this gives for $x\ge x_0(\ep)$, for every $N$,
\[
\P(\Delta \Rc^N(\tau^N_1-1)>x) \ge (1-\ep)(1-e^{-\mu_N([(1+\ep)x,\infty))}).
\]
Now, by the regular variation of $h(x)$, we have for every $N$, for large $x$, $\mu_N([(1+\ep)x,\infty)) \ge (1+2\ep)^{-\alpha} \mu_N([x,\infty))$. Since $\ep$ was arbitrary, this proves the lemma.
\end{proof}

\begin{proof}[Proof of Proposition~\ref{prop:Rc_tau}]
For better readability, we use the same notation as in the proof of Lemma~\ref{lem:Delta_Rc}. Note that we trivially have $\P(\Delta \Rc(\tau_1-1) > x) \le \P(\Rc(\tau_1) > x)$, such that the lower bound on $\P(\Rc(\tau_1) > x)$ directly follows from Lemma~\ref{lem:Delta_Rc}. 

For the upper bound, fix $\ep>0$. By the definition of the process $\Rc$, for every $x>0,$ the event $\{\Delta \Rc(\tau_1-1) \le (1-\ep)x,\,\Rc(\tau_1)>x\}$ implies the event $\{\Rc^-(\tau_1-1) > \ep x\}$, such that
\begin{equation}
\label{eq:121}
\P(\Rc(\tau_1) > x) \le \P(\Delta \Rc(\tau_1-1) > (1-\ep)x) + \P( \Rc^-(\tau_1-1) > \ep x).
\end{equation}
In order to bound the second term on the right-hand side,  let $C$ be large enough, such that with $L_x = \lceil C\log x\rceil$, we have $\P(\tau_1 > L_x) \le \mu([x,\infty))^2$ for large $x$ and every $N$ (this is possible by  Proposition~\ref{prop:stairs_regen} and Potter's bounds for regularly varying functions \cite[Theorem 1.5.6]{Bingham1987}). We then have for large
$x$,  with $\Rc^-(t) = \Rc(t-)$,
\begin{equation}
\label{eq:123}
\P( \Rc^-(\tau_1-1) >  x) \le \mu([x,\infty))^2 + \P( \Rc^-(\tau_1-1) > x,\,\tau_1\le L_x)
\end{equation}
We furthermore split the second term on the right-hand side of \eqref{eq:123} into two, according to whether $\max_{t<\tau_1-1}\xi_t > x/2$ or not. Now first note that if $\xi_t > x/2$ for some $t<\tau_1-1$, then necessarily $\xi_{t'} > x/4$ for some $t'<\tau_1-1$, otherwise $\tau_1 \le t+1$, which is a contradiction (this is the same reasoning as the one used in the proof of Proposition~\ref{prop:stairs_regen}). As a consequence, for some numerical constant $c$ we have for large $x$,
\begin{equation}
\label{eq:125}
\P(\max_{t<\tau_1-1}\xi_t > x/2,\,\tau_1\le L_x) \le c(L_x\mu([x,\infty)))^2.
\end{equation}
On the other hand, by \eqref{eq:Rcrep}, 
\[
\P( \Rc^-(\tau_1-1) > x,\,\max_{t<\tau_1-1}\xi_t \le x/2,\,\tau_1<L_x) \le \P(M_1+\cdots+M_{L_x} > x\,|\,\forall i:M_i \le x/2),
\]
and by the proof of Lemma~3 of \cite{Durrett1979} (see also the lemma stated on p.\,168 of \cite{Durrett1983}), the last quantity is less than $K_\alpha x^{-3\alpha/2}$ for a constant $K_\alpha$. Together with \eqref{eq:123} and \eqref{eq:125} and Potter's bounds \cite[Theorem 1.5.6]{Bingham1987}, this gives for large $x$, 
\[
\P( \Rc^-(\tau_1-1) >  x) \le \mu([x,\infty)) x^{-\alpha/3},
\]
and the regular variation of $h(x)$ then yields existence of $x_0$ (depending on $\ep$), such that $\P(\Rc^-(\tau_1-1) > \ep x) \le \ep \mu([x,\infty))$ for $x\ge x_0$. Together with \eqref{eq:121} and Lemma~\ref{lem:Delta_Rc}, as well as the regular variation of $h(x)$ and the fact that $\E[\tau_1]\ge1$ by definition, this finishes the proof.
\end{proof}  

\begin{proof}[Proof of Theorem~\ref{th:stairs}]
\uline{Case $\alpha > 1$:} Here, $\int_1^\infty x \mu_\alpha(\dd x) < \infty$ and for every $N$, $\int_1^\infty x\mu_N(\dd x) < \infty$ by Potter's bounds \cite[Theorem 1.5.6]{Bingham1987}. By Proposition~\ref{prop:stairs_integrable}, the limits  $\rho_N = \lim_{t\to\infty} \Rc^N(t)/t$ and $\rho_\alpha = \lim_{t\to\infty} \Rc^\alpha(t)/t$ exist almost surely and in \(L^{1}\) and satisfy $\rho_{N} = \E[\Rc^N(\tau^{N}_1)]/\E[\tau^{N}_1]$ and $\rho_\alpha = \E[\Rc^\alpha(\tau^\alpha_1)]/\E[\tau^\alpha_1]$. Furthermore, by Propositions~\ref{prop:stairs_regen} and~\ref{prop:stairs_convergence}, we have $\E[\tau^{N}_1]\to\E[\tau^\alpha_1]$ and $\Rc^N(\tau^{N}_1)$ converges in law to $\Rc^\alpha(\tau^{\alpha}_1)$, as $N\to\infty$. In order to show that $\rho_N\to\rho_\alpha$ as $N\to\infty$, it therefore remains to show that the sequence of random variables $\Rc^N(\tau^N_1)$ is uniformly integrable in $N$. But this follows from Proposition~\ref{prop:Rc_tau} and the fact that the restrictions of the measures $\mu_N$ to $[1,\infty)$ are uniformly integrable in $N$, again by  Potter's bounds \cite[Theorem 1.5.6]{Bingham1987}. This finishes the proof of the case $\alpha > 1$.

\uline{Case $\alpha = 1$, $\E[X]<\infty$:} As in the previous case, the limit  $\rho_N = \lim_{t\to\infty} \Rc^N(t)/t$ exists almost surely and in \(L^{1}\)  by Proposition~\ref{prop:stairs_integrable} and satisfies $\rho_{N} = \E[\Rc^N(\tau^{N}_1)]/\E[\tau^{N}_1]$. Furthermore, since the measures $\mu_N$ converge to $\mu_1(\dd x) = x^{-2}\,\dd x$ which satisfies $\int_1^\infty x\mu_1(\dd x) = \infty$, we have $\int_1^\infty x\mu_N(\dd x)\to\infty$, as $N\to\infty$. Proposition~\ref{prop:Rc_tau} now yields as $N\to\infty$ (note that here we need the fact that $x_0$ in the statement of  Proposition~\ref{prop:Rc_tau} is independent of $N$),
\[
\rho_N \sim \int_1^\infty \mu_N([x,\infty))\,\dd x \sim \int_1^\infty \frac{h(c_N)}{h(c_N x)}\,\dd x,
\]
which yields the theorem in the case $\alpha = 1$, $\E[X]<\infty$.

\uline{Case $\alpha = 1$, $\E[X]=\infty$:} For each $N\in\N$, define the random variable $S^N=\Rc^N(\tau^N_1)/\E[\tau^N_1]$ and set \(a^{N}_n = \inf\{x:\P(S^N>x)>n^{-1}\}.\)
We introduce the relation $a^N_n\asymp b^N_n$ between two positive double sequences meaning that $\lim_{N\to\infty}\limsup_{n\to\infty}|a^N_n/b^N_n-1|=0$. By Proposition~\ref{prop:Rc_tau}, 
\[
a^N_n \asymp (c_N\E[\tau^N_1])^{-1}h^{-1}(\E[\tau_1^N]h(c_N) n) \asymp c_N^{-1}h(c_N)h^{-1}(n),
\]
where the last relation follows from the fact that $h^{-1}$ is regularly varying with index 1 \cite[Theorem~1.5.12]{Bingham1987}.
Now define $\beta^N_n = \E[S^N\Ind_{S^N\le a^N_n}]$. By Proposition~\ref{prop:Rc_tau}, and the ``boundary case'' of Karamata's theorem \cite[Proposition~1.5.9a]{Bingham1987}, we have
\begin{align*}
\beta^N_n &= \int_0^{a_n^N} \P(S^N>x)\,\dd x - a_n^N\P(S^N>a_n^N)\\
&\sim \int_1^{a^N_n} \frac{h(c_N)}{h(c_N x)}\,\dd x \sim \int_1^{h^{-1}(n)} \frac{h(c_N)}{h(c_N x)}\,\dd x,\quad\text{as $n\to\infty$}.
\end{align*}
Furthermore, again by \cite[Proposition~1.5.9a]{Bingham1987}, we have  $n\beta^N_n/a^N_n\to\infty$ and  \(\beta^{N}_{n}\sim b^N_{cn}\) for every constant $c>0$, as $n\to\infty$. Now, if $(S^N_n)_{n\ge0}$ is a random walk with increments distributed according to $S^N$, then standard results on random walks (see e.g.\ \cite[Theorem~2.7.7]{Durrett1996}, note that $\P(S^N > x)$ is regularly varying for every $N$, by Proposition~\ref{prop:Rc_tau}) imply that $(S^N_n - n\beta^N_n)/a^N_n$ converges in law to a non-degenerate random variable, as $n\to\infty$. With the above, this implies that $S^N_n/n\beta^N_n\to 1$ in probability, as $n\to\infty$. Together with the fact that $\tau^N_n/n \to \E[\tau^N_1]$ almost surely as $n\to\infty$ and the monotonicity of $\Rc^N(t)$, this readily yields the theorem in the case $\alpha = 1$, $\E[X]=\infty$.


\uline{Case $\alpha \in (0,1)$:} This case is similar to the previous one. Let $(S^N_n)_{n\ge0}$ be a random walk with increments distributed as $\Rc^N(\tau^N_1)$ and set \(a^{N}_n = \inf\{x:\P(S^N_1>x)>n^{-1}\}.\) Again by standard results on random walks (see e.g.\ \cite[Theorem
 XVII.5.3]{Feller1971} or \cite[Theorem~2.7.7]{Durrett1996}), since, by Proposition~\ref{prop:Rc_tau}, $\P(S^N > x)$ is regularly varying for every $N$, the sequence $S^N_n/a^N_n$ converges in law to $W_\alpha$, as $n\to\infty$, for every $N$. By Proposition~\ref{prop:Rc_tau},
\[
a^N_n \asymp \frac{h(c_N)^{1/\alpha}}{c_N} h^{-1}(\E[\tau^N_1]n).
\] 
Together with the fact that $\tau^N_n/n \to \E[\tau^N_1]$ almost surely as $n\to\infty$ and the monotonicity of $\Rc^N(t)$, this finishes the proof.
\end{proof}

We finish this section with an easy lemma which will be used in the proof of Theorem~\ref{th:convergence}.
\begin{lemma}
\label{lem:stoch_continuity}
The process $(\Rc(t))_{t\ge0}$ is stochastically continuous. Furthermore, the processes $(\Rc(t))_{t\ge0}$ and $(\Rc(t+1))_{t\ge0}$ almost surely do not have common jumps.
\end{lemma}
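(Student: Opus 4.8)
The plan is to prove the two assertions separately, both resting on the Poisson structure underlying the $\mu$-stairs process. First I would establish stochastic continuity. Fix $t>0$; I want to show $\P(\Delta\Rc(t)>0)=0$, where $\Delta\Rc(t)=\Rc(t)-\Rc(t-)$. Using the representation \eqref{eq:Rcrep} (or the inductive definition \eqref{eq:def_stairs}), a jump of $\Rc$ at time $t$ can only be caused either by an atom $(t,x)$ of the Poisson point process sitting exactly at abscissa $t$, or — via the recursion — by a jump of $\Rc(t-1)$ combined with a suitable record configuration. Since the Poisson process has intensity $\dd s\otimes\mu$, which is non-atomic in the time coordinate, for any fixed $t$ there is almost surely no atom with first coordinate equal to $t$; and the set of jump times of $\Rc$ restricted to any $[n,n+1]$ is contained in the (countable) set of abscissae of atoms in $[n-1,n+1]$ shifted appropriately, so again a fixed $t$ is almost surely not among them. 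Hence $\P(\Delta\Rc(t)>0)=0$, which gives stochastic continuity. I expect this step to be essentially routine once one unwinds the definition; the only mild care needed is to argue that $\Rc$ has only countably many jumps on compacts (it is non-decreasing and c\`adl\`ag, so this is automatic) and that each jump time is ``driven'' by an atom.

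The second assertion — that $(\Rc(t))_{t\ge0}$ and $(\Rc(t+1))_{t\ge0}$ almost surely have no common jump — is the substantive part. The key observation is that $\Rc$ has the same law as its time-shift in the following sense: the jump times of $\Rc$ form a point process determined by the Poisson process $\Pc$, and $\Rc(\cdot+1)$ jumps at time $t$ iff $\Rc$ jumps at time $t+1$. So I must show that almost surely there is no $t\ge 0$ with $\Delta\Rc(t)>0$ and $\Delta\Rc(t+1)>0$ simultaneously. I would argue as follows: let $D$ be the (random, countable) set of jump times of $\Rc$. It suffices to show that for each jump time $t\in D$, almost surely $t+1\notin D$. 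Condition on $\Pc$ restricted to $[0,t]\times\R_+$, which determines whether $t\in D$ and the value of $\Rc$ on $[0,t]$. The jump of $\Rc$ at time $t+1$, if any, is determined by $\Rc$ on $[t,t+1]$ together with the atoms of $\Pc$ in $(t,t+1]\times\R_+$; in particular, for $\Rc$ to jump exactly at $t+1$ one needs either an atom of $\Pc$ with abscissa exactly $t+1$ (probability zero, by non-atomicity of $\dd s$) or a jump of $\Rc(\cdot-1)$ at $t+1$, i.e.\ a jump of $\Rc$ at $t$, combined with the record configuration propagating it — but propagation of the jump at $t$ to a jump at $t+1$ requires that the atom realizing the record in $(t,t+1]$ lands exactly at abscissa $t+1$, again a probability-zero event. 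So conditionally on $t\in D$, $\P(t+1\in D)=0$.

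To turn this into a statement over the uncountable family of candidate times, I would enumerate: the set $D$ of jump times is contained in $\{s_i+k : i\in\N,\ k\in\{0,1\}\}$ where $(s_i)_{i\in\N}$ enumerates the abscissae of atoms of $\Pc$ (this follows by induction from \eqref{eq:def_stairs}: a jump in $(n,n+1]$ occurs either at an atom abscissa in $(n,n+1]$ or one unit to the right of a jump time in $(n-1,n]$). Thus $D$ is countable and measurably determined by $\Pc$. For each fixed pair $(i,k)$, apply the conditioning argument above with $t=s_i+k$ on the event $\{s_i+k\in D\}$ to conclude $\P(s_i+k\in D,\ s_i+k+1\in D)=0$; then take a countable union over $(i,k)$. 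The main obstacle I anticipate is making the ``propagation requires an atom exactly at $t+1$'' step fully rigorous: one must carefully trace through the record construction in \eqref{eq:def_stairs} to verify that a jump of $\Rc$ at an interior time $t+1\in(n+1-1,n+1]$ cannot be inherited purely from the jump of $\Rc(\cdot-1)=\Rc$ at $t$ unless the maximizing index $s$ in \eqref{eq:def_stairs} is attained at a value making $t-s$ hit an atom at exactly the right place — equivalently, one should observe that on the event that $\Rc$ jumps at $t$, the process $\Rc(\cdot-1)$ is constant in a right-neighbourhood of $t$ just after the jump, so the only way $\Rc$ can have an additional jump exactly at $t+1$ is through a fresh atom of $\Pc$ at abscissa $t+1$, which has probability zero. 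Once that local picture is pinned down, the rest is bookkeeping with countable unions and the non-atomicity of Lebesgue measure in the time coordinate.
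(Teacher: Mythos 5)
Your overall strategy is sound and, for the second claim, essentially the same as the paper's: the paper also reduces to showing that if $T$ is a stopping time at which $\Rc$ jumps, then $T+1$ is almost surely not a jump time (by the independence properties of the Poisson process), enumerating the jumps of size greater than $\ep$ as stopping times $T_k$ and letting $\ep\to0$. For the first claim the paper takes a shorter route, namely the stochastic domination $\Rc(t+\delta)\stackrel{\text{st}}{\le}\Rc(t)+\Rc'(\delta)$ for an independent copy $\Rc'$ together with $\P(\Rc'(\delta)>\ep)=1-e^{-\delta\mu((\ep,\infty))}\to0$; your ``a fixed $t$ is a.s.\ not in the countable set of jump times'' argument is an equally valid alternative.

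Two points in your write-up need repair, both fixable. First, the containment $D\subseteq\{s_i+k:i\in\N,\ k\in\{0,1\}\}$ does not follow from the induction you describe, which only yields $D\subseteq\bigcup_{k\ge0}(A+k)$ with $A$ the set of atom abscissae. Fortunately a stronger statement is true and disposes of the ``propagation'' issue you flag as your main obstacle: every jump of $\Rc$ occurs at an atom abscissa. Indeed, writing \eqref{eq:def_stairs} as $\Rc(t)=\max_{u\in[t-1,t]}(\Rc(u-1)+\xi_u)$, each candidate with $u\in[t-1,t)\cap A$ is active at every time $s\in(u,t)$ (because $u+1\ge t$), so $\Rc(t-)\ge\Rc(u-1)+\xi_u$, while the non-atom candidates contribute only $\Rc(t-1)\le\Rc(t-)$; hence $\Delta\Rc(t)>0$ forces $t\in A$ and $\Delta\Rc(t)\le\xi_t$. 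In particular the atom at $t$ stays active on all of $[t,t+1]$, so $\Rc((t+1)-)\ge\Rc(t)$ and a jump at $t+1$ indeed requires a fresh atom exactly at $t+1$, as you guessed. Second, conditioning ``on $\Pc$ restricted to $[0,t]\times\R_+$ with $t=s_i+k$'' is not licensed for an arbitrary measurable enumeration $(s_i)$ of the atoms (which, when $\mu((0,\infty))=\infty$, cannot be taken in increasing order): you need $s_i$ to be a stopping time, e.g.\ the $i$-th atom with mark at least $1/j$ in increasing abscissa order, or the paper's choice of the $k$-th jump of size greater than $\ep$. Alternatively, once jumps are known to sit on atoms you can bypass conditioning entirely: a common jump of $\Rc(\cdot)$ and $\Rc(\cdot+1)$ would produce two atoms at distance exactly $1$ with marks at least $1/j$ and $1/j'$ for some $j,j'$, and each such configuration is a.s.\ absent by the multivariate Mecke formula; a countable union over $(j,j')$ concludes.
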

\begin{proof}
For every $t\ge0$ and $\delta > 0$, $\Rc(t+\delta)$ is by definition stochastically dominated by $\Rc(t)+\Rc'(\delta)$, for an independent copy $\Rc'$ of $\Rc$. This easily yields the first claim. For the second claim, we note that for every stopping time $T$ of the process $(\Rc(t))_{t\ge0}$, by the independence properties of the Poisson process, $T+1$ is almost surely not a jump time of $\Rc$. Fix $\ep>0$ and define $T_k$ to be the time of the $k$-th jump of size greater than $\ep$. Then $T_k\to\infty$ almost surely as $k\to\infty$ and almost surely, $T_k+1$ is not a jump time of $\Rc$ for every $k$. Letting $\ep\to0$ yields the lemma.
\end{proof}

\section{Coupling the BRW with a discretised stairs process}
\label{sec:brw_coupling}

Recall the definition of $X$, $(X_{n,i})_{n\ge0,i\in[2N]}$ and $\X(n) =  \{ \X_1(n) \leq \cdots \leq \X_N(n) \}$ from the introduction and define the rescaled variables $Y := c_N^{-1} X$, $Y_{n,i} := c_N^{-1} X_{n,i}$ and $\Y(n) := c_N^{-1} \X(n)$. For $n\ge1$, let $\F_n$ be the sigma-algebra generated by the variables $Y_{k,i}$ for $0\le k\le n-1$, $i\ge 1$ and set $\F_{-n}$ to the trivial $\sigma$-field for all $n\ge0$.  Note that $\X(n)$ and $\Y(n)$ are adapted to the filtration $(\F_n)_{n\ge0}$. For integers $N,\ell \geq 1$, the \emph{$(N,\ell)$-discretised stairs process} (DSP) is the process $(R^{N,\ell}(n))_{n\in\Z}$ defined inductively by $R^{N,\ell}(n)=0$ for all $n\le 0$ and
\begin{equation}
 \label{eq:dsp}
 R^{N,\ell}(n+1) = R^{N,\ell}(n)\vee \max_{i=1,\ldots,2N} (R^{N,\ell}(n-\ell)+Y_{n,i}).
\end{equation}

\subsection{Lower bound}
\label{sec:lower}

\begin{proposition}
\label{prop:lower}
 Let $\ell_N=\lceil \logtwo N \rceil$. Then $R^{N,\ell_N}(n-\ell_N)\le \Y_1(n)$ and \(R^{N,\ell_N}(n)\le \Y_N(n)\) for all $n\ge0$.
\end{proposition}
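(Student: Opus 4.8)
The plan is to prove the two inequalities simultaneously by strong induction on $n$, keeping track not merely of the extremal particles of the $N$-BRW but of how many of its particles lie above a prescribed level. Write $R:=R^{N,\ell_N}$ and couple the DSP and the $N$-BRW by driving both with the same i.i.d.\ rescaled jumps (re-indexing if necessary so that at each step the $2N$ jumps used by the $N$-BRW are exactly $Y_{n,1},\dots,Y_{n,2N}$); note that $R(n+1)=R(n)\vee\big(R(n-\ell_N)+\max_{1\le i\le 2N}Y_{n,i}\big)$. The only case worth separating out is $N=1$, where $\ell_N=0$ and the DSP recursion reduces to that of the single-particle $N$-BRW, so the two processes coincide; we henceforth assume $N\ge2$, hence $\ell_N\ge1$. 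Two elementary observations about the $N$-BRW will be used:
\begin{itemize}
\item[(a)] the trajectories $n\mapsto\Y_1(n)$ and $n\mapsto\Y_N(n)$ are non-decreasing and non-negative (jumps are non-negative and positions start at $0$; moreover selection retains the particles of largest position, so $\Y_N(n+1)$ equals the overall maximum of the $2N$ children);
\item[(b)] if $\Y_N(m)\ge v$ for some time $m$ and some $v\in\R$, then $\#\{i:\Y_i(n)\ge v\}\ge\min(2^{\,n-m},N)$ for every $n\ge m$.
\end{itemize}
Claim (b) follows by a short induction on $n$: if $c:=\#\{i:\Y_i(n)\ge v\}$, then these $c$ particles each have two children still above $v$, so before selection at least $2c$ of the $2N$ particles are $\ge v$; selection then leaves at least $\min(2c,N)$ of them, because if more than $N$ particles are $\ge v$ then the $N$ retained ones are all $\ge v$, whereas if at most $N$ are $\ge v$ they all survive. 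Since $\ell_N=\lN$ we have $2^{\ell_N}\ge N$, so (b) with $m=n-\ell_N$ says precisely that a single particle lying above $R(n-\ell_N)$ at time $n-\ell_N$ forces \emph{all} $N$ particles above that level by time $n$.

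The induction hypothesis is
\[
H_n:\qquad \Y_1(n)\ge R(n-\ell_N)\ \text{ and }\ \Y_N(n)\ge R(n).
\]
For $n\le0$ both sides vanish, so $H_n$ holds. Assume $H_0,\dots,H_n$ and prove $H_{n+1}$. First, $\Y_N(n+1)\ge R(n+1)$: if $R(n+1)=R(n)$, then by (a) and $H_n$, $\Y_N(n+1)\ge\Y_N(n)\ge R(n)=R(n+1)$; if instead $R(n+1)=R(n-\ell_N)+\max_i Y_{n,i}$, consider the child of the $N$-BRW performing this largest jump — by $H_n$ its parent sits at position $\ge\Y_1(n)\ge R(n-\ell_N)$, so this child reaches position $\ge R(n-\ell_N)+\max_i Y_{n,i}=R(n+1)$, and since by (a) $\Y_N(n+1)$ is the overall maximum of the children, $\Y_N(n+1)\ge R(n+1)$. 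Next, $\Y_1(n+1)\ge R(n+1-\ell_N)$: if $n+1-\ell_N\le0$ this is immediate from (a); otherwise set $m=n+1-\ell_N$, so $0\le m\le n$ (using $\ell_N\ge1$) and $H_m$ gives $\Y_N(m)\ge R(m)$, whence by (b), $\#\{i:\Y_i(n+1)\ge R(m)\}\ge\min(2^{\,n+1-m},N)=\min(2^{\ell_N},N)=N$; thus every particle at time $n+1$ lies above $R(m)=R(n+1-\ell_N)$, i.e.\ $\Y_1(n+1)\ge R(n+1-\ell_N)$. This proves $H_{n+1}$, and $H_n$ for all $n\ge0$ is exactly the proposition.

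The single delicate point — and the only place the exact value $\ell_N=\lN$ enters — is the mechanism encoded in (b): a record-setting jump of the DSP can be matched in the coupled $N$-BRW only by a particle whose parent already sits above the current level $R(n-\ell_N)$, which is why one cannot close the induction on the pair $(\Y_1,\Y_N)$ alone but must propagate the stronger statement that \emph{every} particle of the $N$-BRW is above $R(n-\ell_N)$, carried along by the doubling count (b) together with $2^{\ell_N}\ge N$. The remainder — monotonicity, the effect of one branching-and-selection step on maxima, minima and level counts, and the (harmless) tie-breaking in selection — is routine bookkeeping.
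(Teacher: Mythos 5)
Your proof is correct and takes essentially the same route as the paper: the paper argues by contradiction at the first time $\tau$ either inequality fails, which is just your strong induction in disguise, and it rests on the same two mechanisms — the record jump of the DSP is matched by a child whose parent sits at $\Y_1(n)\ge R(n-\ell_N)$, and a particle above level $R(m)$ at time $m$ forces all $N$ particles above that level $\ell_N$ steps later because $2^{\ell_N}\ge N$. Your observation (b) merely spells out the doubling-under-selection count that the paper compresses into one sentence.
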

\begin{proof}
Writing $R(n) = R^{N,\ell_N}(n)$ for short, define the random time
\[
\tau = \inf\{n\ge 0:\Y_1(n) < R(n-\ell_N)\text{ or }\Y_N(n) < R(n)\}.
\]
We shall prove by contradiction that  $\tau$ is infinite almost surely. Assume that $\tau$ is finite for some realisation of the variables $Y_{n,i}$. Set $\Y_i(-n) = 0$ for all $n>0$. By definition of $\tau$, one has $\Y_1(\tau-1) \ge R(\tau-1-\ell_N)$, so that by definition of $\Y(n)$ and $R(n)$, we then have $\Y_N(\tau)\ge R(\tau)$. We therefore must have $\Y_1(\tau) < R(\tau-\ell_N)$. But now, by the definition of $\tau$, we have $\Y_N(\tau-\ell_N) \ge R(\tau-\ell_N)$. Hence, at time $\tau$, there are no particles below $R(\tau-\ell_N)$, otherwise the particle at position $\Y_N(\tau-\ell_N)$ at time $\tau-\ell_N$ would have $2^{\ell_N}\ge N$ descendants at time $\tau$, all above $R(\tau-\ell_N)$, whence the total number of particles would be larger than $N$, which is a contradiction. Thus, $\Y_1(\tau) \ge R(\tau-\ell_N)$, but this contradicts the definition of~$\tau$.
\end{proof}

\subsection{Upper bound}
\label{sec:upper}

The main result in this section is Proposition~\ref{prop:theta}. One should not be fooled by its apparent simplicity, its proof is more intricate than it looks at first sight (and took us quite some time to come up with). Let  $\delta_N$ be a positive sequence which tends to zero as $N\to\infty$ but such that $\delta_N N^{\ep}\to\infty$ for all $\ep>0$. Set 
\(m_N = \logtwo N +\logtwo \delta_N\) and assume that $m_N\in\N$ and $m_N\ge 1$ for all $N$. Define the process $(\theta_n)_{n\ge0}$ by $\theta_0=0$ and
\[
 \theta_{n} = \max_{0\le k\le n}\{\Y_N(k)-R^{N,m_N}(k)\}.
\]


\begin{proposition}
\label{prop:theta}
For all $\ep>0$, for all large enough $N$ and for all $(1/2)\log_2 N \le m\le 2\log_2N$ and $x\ge m^{(1\vee \alpha^{-1})+\ep}/c_N$, for all $n\ge0$,
 \[\P(\theta_{n+1}-\theta_n > x\,|\,\F_{n-m}) \le Nm2^{m+1}/h(c_Nx)^2.\]
\end{proposition}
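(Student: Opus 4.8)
The plan is to bound the conditional probability $\P(\theta_{n+1}-\theta_n > x\,|\,\F_{n-m})$ by identifying the ``last good time'' before $n+1$ at which the $N$-BRW maximum was not too far ahead of the DSP, and then using the branching mechanism to control how much the maximum can gain over a short window. Concretely, on the event $\{\theta_{n+1}-\theta_n > x\}$ there must be some $k\le n+1$ with $\Y_N(k) - R^{N,m_N}(k) > \theta_n + x \ge \Y_N(k')-R^{N,m_N}(k')$ for all $k'\le n$, so in fact $k = n+1$ and $\Y_N(n+1) > R^{N,m_N}(n+1) + \theta_n + x$. The key structural observation (as in the proof of Proposition~\ref{prop:lower}) is that any particle at time $n+1$ whose position exceeds $R^{N,m_N}(n+1)$ by a macroscopic amount cannot have ``too many'' ancestors sitting below the DSP level, for otherwise the exponential growth $2^{m_N} = N\delta_N$ of descendants over $m_N$ generations would blow up the particle count. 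More precisely, tracing the genealogy of the leading particle back $m$ generations, I would argue that on the event in question there is a time $j$ with $n+1-m \le j \le n+1$ and a single ancestral particle at time $j$ whose displacement in the step from $j-1$ to $j$ (or over a short run of steps) was at least of order $x$, i.e.\ at least $c_N x$ in the unrescaled variables; intuitively, a gain of $x$ over the DSP has to be realized by one big jump because the DSP already absorbs all jumps of size up to the window length into its own increments.

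Having reduced to the existence of a large jump, the next step is a union bound. There are at most $m$ time steps in the window $[n+1-m, n+1]$ and at most $2^m$ candidate ancestral particles at each relevant generation (more carefully, at most $N\cdot 2^{m+1}$ displacement variables $X_{k,i}$ with $n-m \le k \le n$ come into play, which is where the factor $Nm2^{m+1}$ will come from), and for each one the probability that its displacement exceeds $c_N x$ is $\P(X > c_N x) = 1/h(c_N x)$. This alone would only give a bound with one power of $h(c_N x)^{-1}$, so the quadratic improvement must come from requiring \emph{two} independent ingredients: not just that some particle jumped by $c_N x$, but that simultaneously this jump actually produced a new record relative to the DSP — which itself forces a second rare event, namely that the ancestor was already far enough behind (at DSP level, not $c_N x$ ahead) that a jump of size $c_N x$ was needed. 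Equivalently, I would split according to the position of the jumping ancestor at time of the jump: either it was already more than $\tfrac12 c_N x$ above the relevant DSP reference level — a rare event governed by $\P(\text{some earlier jump} > \tfrac12 c_N x)$, contributing another $1/h(c_N x)$ up to constants by Potter's bounds — or it jumped essentially the full $c_N x$ from near the DSP level, and then one can afford to ask that the jump be at least $c_N x$ while the ancestor's own history stayed controlled, again a product of two tail events. The constraint $x \ge m^{(1\vee\alpha^{-1})+\ep}/c_N$ and the condition $\delta_N N^\ep \to \infty$ are precisely what make the polylogarithmic prefactors $Nm2^{m+1} = N^{1+o(1)}\delta_N \cdot m$ negligible against $h(c_N x)^2 \approx (c_N x)^{2\alpha} \gtrsim m^{2+2\alpha\ep}$ in the regime of interest, though for the statement itself we only need the clean inequality, not this comparison.

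The main obstacle, I expect, is making rigorous the claim that a macroscopic gain of the $N$-BRW maximum over the DSP is necessarily caused by (essentially) a single large displacement, rather than by an accumulation of moderate ones — and, relatedly, pinning down the correct reference DSP level to subtract when measuring that displacement. This is subtle because the DSP $R^{N,m_N}$ updates with a lag of $m_N$ generations while $\theta_n$ looks at the maximal discrepancy over \emph{all} past times, so one has to carefully track which $R^{N,m_N}(j)$ the leading particle's ancestors are being compared against, and exploit the population-size contradiction (the $2^{m_N}\ge N$ argument) to rule out the ``many ancestors, each contributing a little'' scenario. Once that combinatorial/genealogical reduction is in place, the probabilistic estimate is a routine union bound together with Potter's bounds for the regularly varying $h$; the conditioning on $\F_{n-m}$ is harmless because all the displacement variables involved in steps $n-m, \ldots, n$ are independent of $\F_{n-m}$, while the reference levels $R^{N,m_N}(j)$ and $\Y_N(j)$ for $j \le n-m$ that enter the bound are $\F_{n-m}$-measurable.
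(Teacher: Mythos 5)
Your overall skeleton is the right one --- the event $\{\theta_{n+1}-\theta_n>x\}$ is reduced to the conjunction of two tail events (a final displacement $>x$ \emph{and} an anomalously high parent), and the square of $1/h(c_Nx)$ comes from multiplying their probabilities --- but two essential pieces are missing or misplaced. First, the ingredient you flag as ``the main obstacle'' (that a gain of $x$ must come from essentially one big displacement) is not resolved in the paper by any genealogical or population-size ($2^{m}\ge N$) argument; it is resolved \emph{probabilistically}. The paper shows the event deterministically implies that some particle $i$ satisfies $\Y_i(n)>x+\Y_N(n-m)$ together with $Y_{n,2i+j}>x$: the final jump $>x$ follows from $R^{N,m}(n+1)\ge R^{N,m}(n)$ and $\Y_i(n)\le R^{N,m}(n)+\theta_n$, while the height condition follows from $R^{N,m}(n+1)\ge R^{N,m}(n-m)+Y_{n,2i+j}$, which is exactly the sense in which the DSP ``absorbs'' the final jump --- so your second branch (a particle near the DSP level making the full jump) is deterministically void, not ``again a product of two tail events''. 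The number $M_x(n)$ of particles satisfying the height condition is then dominated, given $\F_{n-m}$, by the number of particles above $x$ in $N$ \emph{unselected} BRWs run for $m$ steps, giving $\E[M_x(n)\,|\,\F_{n-m}]\le N2^m\P(S_m>x)$ with $S_m$ a sum of $m$ i.i.d.\ copies of $Y$. The crucial quantitative step, which your proposal never supplies, is the heavy-tailed large-deviation estimate $\P(S_m>x)\le 2m\P(Y>x)$ (Cline--Hsing type), and this is precisely where the hypothesis $x\ge m^{(1\vee\alpha^{-1})+\ep}/c_N$ is used. You instead attribute that hypothesis to making the prefactor $Nm2^{m+1}$ negligible against $h(c_Nx)^2$ --- and then correctly observe that no such comparison is needed for the stated inequality --- which leaves the hypothesis with no role in your argument and the second factor of $1/h(c_Nx)$ unproved.

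Second, your bookkeeping for the prefactor does not close: a union bound over the displacement variables in the window gives $2N(m+1)$ terms, not $Nm2^{m+1}$. In the paper the factor decomposes as $N2^m$ (particles in the unselected BRWs after $m$ steps) times $2m$ (from the large-deviation bound) times the independent final tail $1/h(c_Nx)$, i.e.\ $N2^m\cdot 2m\P(Y>x)\cdot\P(Y>x)=Nm2^{m+1}/h(c_Nx)^2$. Without the domination by the unselected BRW and the bound on $\P(S_m>x)$, the ``many moderate ancestral jumps'' scenario you worry about is not excluded, and the proof does not go through as proposed.
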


\begin{proof}
Let $n\in\N$ and $x>0$. Set $\Y_i^{(n)}(k) = \Y_i(k) - \theta_n$, $k=0,1,\ldots$ and note that $\Y_N^{(n)}(k) \le R^{N,m}(k)$ for all $0\le k\le n$.
From the definitions, one checks that the event $\theta_{n+1}-\theta_n > x$ implies that there exists $i\in[N]$ and $j\in\{0,1\}$, such that on the one hand $Y_{n+1,2i+j} > x$ and on the other hand $\Y_i^{(n)}(n) > x+R^{N,m}(n-m)\ge x+\Y_N^{(n)}(n-m)$, whence $\Y_i(n) > x+\Y_N(n-m)$ (this is best seen by a picture).

If we denote by $M_x(n)$ the number of particles above $x+\Y_N(n-m)$ at time $n$ (i.e.\ $M_x(n) = \#\{i\in[N]:\Y_i(n) > x+\Y_N(n-m)\}$), then a union bound gives,
\begin{equation}
 \P(\theta_{n+1}-\theta_n > x\,|\,\F_{n-m}) \le \E(M_x(n)\,|\,\F_{n-m})\P(Y>x) \le \E[M_x(m)]/h(c_N x).
\end{equation}
Let $(S_n)_{n\ge0}$ be a random walk with increments distributed according to $Y$. Bounding $M_x(m)$ by the number of particles above $x$ at time $m$ in $N$ branching random walks \emph{without} selection, we get
\begin{equation}
\E[M_x(m)] \le N 2^{m} \P(S_{m} > x).
\end{equation}

Now, for all $\ep>0$, for all large enough $N$ and for all $(1/2)\log_2 N \le m\le 2\log_2N$ and $x\ge m^{(1\vee \alpha^{-1})+\ep}/c_N$, we have $\P(S_{m}>x) \le 2m\P(Y> x)$ (see e.g.\ \cite[Theorem~3.3]{Cline1998}).
The statement follows.
\end{proof}

\begin{corollary}
\label{cor:upper}
Let $p\in [0,2\alpha)$. Then for every $0<\ep\le (2\alpha-p)/2$, there exists $N_\ep$, such that for $N>N_\ep$ and $n\ge0$, we have
\[
\E[(\theta_{n+1}-\theta_n)^p\,|\,\F_{n-m_N}] \le \left(1+\frac{4p}{2\alpha-p}\right) \left(\frac{\delta_N}{\log_2N}\right)^{\frac p {2\alpha + \ep}}.
\]
\end{corollary}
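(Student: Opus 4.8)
The plan is to integrate the tail bound of Proposition~\ref{prop:theta} to obtain the conditional $p$-th moment. Fix $p\in[0,2\alpha)$ and $\ep\in(0,(2\alpha-p)/2]$, and take $N$ large so that the hypotheses of Proposition~\ref{prop:theta} apply with $m=m_N$; note $m_N = \log_2 N + \log_2\delta_N$ satisfies $(1/2)\log_2 N \le m_N \le 2\log_2 N$ for large $N$ since $\delta_N N^\ep\to\infty$ for all $\ep>0$ and $\delta_N\to 0$. Write $x_N = m_N^{(1\vee\alpha^{-1})+\ep}/c_N$ for the threshold from the proposition. I would split
\[
\E[(\theta_{n+1}-\theta_n)^p\,|\,\F_{n-m_N}] = \int_0^\infty p x^{p-1}\P(\theta_{n+1}-\theta_n > x\,|\,\F_{n-m_N})\,\dd x \le x_N^p + \int_{x_N}^\infty p x^{p-1}\frac{N m_N 2^{m_N+1}}{h(c_N x)^2}\,\dd x,
\]
using the trivial bound $\P(\cdot)\le 1$ on $[0,x_N]$ and Proposition~\ref{prop:theta} on $[x_N,\infty)$.

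For the first term, by definition $2^{m_N} = N\delta_N$, so $x_N^p = (m_N^{(1\vee\alpha^{-1})+\ep} c_N^{-1})^p$; I expect this to be negligible compared to the target because $c_N$ grows like a regularly varying function of index $1/\alpha$ while the target decays only polynomially in $\delta_N/\log_2 N$, and one absorbs the $m_N$ power into the $\ep$-room. For the integral term, the prefactor is $N m_N 2^{m_N+1} = 2 m_N N^2\delta_N$. For the $x$-integral, since $h$ is regularly varying of index $\alpha$, Potter's bounds \cite[Theorem~1.5.6]{Bingham1987} give, for any small $\eta>0$ and $x\ge x_N$ (with $N$ large), $h(c_N x)\ge \tfrac12 h(c_N) x^{\alpha-\eta}$, hence $\int_{x_N}^\infty p x^{p-1} h(c_N x)^{-2}\,\dd x \le 4 h(c_N)^{-2}\int_{x_N}^\infty p x^{p-1-2(\alpha-\eta)}\,\dd x = \frac{4p}{2(\alpha-\eta)-p} h(c_N)^{-2} x_N^{p-2(\alpha-\eta)}$, which converges since $p<2\alpha$ and $\eta$ is chosen small. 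Recalling $h(c_N)\sim 2N\log_2 N$ and $x_N = m_N^{(1\vee\alpha^{-1})+\ep}/c_N$, the integral term becomes, up to constants, $\frac{2p}{2(\alpha-\eta)-p}\cdot\frac{m_N N^2\delta_N}{(N\log_2 N)^2}\cdot (m_N^{(1\vee\alpha^{-1})+\ep}/c_N)^{p-2\alpha+2\eta}$. The factor $N^2\delta_N/(N\log_2 N)^2 = \delta_N/(\log_2 N)^2$, and the remaining powers of $m_N\asymp\log_2 N$ and of $c_N$ must be shown to be dominated, after suitable choice of $\eta$, by the slack between the exponent $\tfrac{p}{2\alpha+\ep}$ in the target and the exponent $1$ available from $\delta_N/(\log_2 N)^2$; here one uses that $c_N\to\infty$ (indeed $c_N = h^{-1}(2N\log_2 N)$ is regularly varying of positive index) to kill the positive power $c_N^{2\alpha-p-2\eta}$ and that $\delta_N N^{\eta'}\to\infty$ cannot be used in the wrong direction, so one instead just uses $\delta_N\le \delta_N^{p/(2\alpha+\ep)}$ (valid for large $N$ since $\delta_N<1$ and $p/(2\alpha+\ep)\le 1$) and absorbs all logarithmic and $c_N$ factors into the constant $1+\frac{4p}{2\alpha-p}$ for $N$ large enough.

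The main obstacle is the bookkeeping in the last step: one must check that the choice of $\eta$ (equivalently, how much Potter slack one takes) together with the $\ep$ appearing in the exponent $p/(2\alpha+\ep)$ leaves enough room so that every stray factor — the power of $m_N$ coming from $x_N$, the power of $c_N$, and the $\log_2 N$ factors — is eventually bounded by the constant $1+\frac{4p}{2\alpha-p}$, uniformly in $n$. The uniformity in $n$ is free since the bound in Proposition~\ref{prop:theta} does not depend on $n$; the only real content is the asymptotic comparison of powers of the regularly varying sequences $c_N$, $h(c_N)$, $m_N$ and $\delta_N$, which is routine but must be done carefully to land exactly on the stated exponent $p/(2\alpha+\ep)$ and constant. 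I would present it by isolating a single inequality of the form ``$(\text{prefactor})\cdot(\text{integral}) \le C_{p,\alpha}\,(\delta_N/\log_2 N)^{p/(2\alpha+\ep)}$ for $N\ge N_\ep$'' and verifying it by taking logarithms and comparing coefficients of $\log N$.
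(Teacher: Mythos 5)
Your overall strategy (integrate the tail bound from Proposition~\ref{prop:theta}) is the same as the paper's, but your choice of cutoff breaks the argument. You split the integral at $x_N = m_N^{(1\vee\alpha^{-1})+\ep}/c_N$, which is merely the \emph{minimal} threshold above which Proposition~\ref{prop:theta} is valid; it decays like a negative power of $N$ (since $c_N = h^{-1}(2N\log_2 N)$ grows like $N^{1/\alpha}$ up to slowly varying factors). The tail integral then satisfies
\[
\int_{x_N}^\infty p x^{p-1} h(c_N x)^{-2}\,\dd x \asymp h(c_N)^{-2}\, x_N^{\,p-2\alpha+2\eta},
\]
and $x_N^{\,p-2\alpha+2\eta} = c_N^{\,2\alpha-2\eta-p}\,m_N^{-((1\vee\alpha^{-1})+\ep)(2\alpha-2\eta-p)}$ contains the factor $c_N^{\,2\alpha-2\eta-p}$ with a \emph{positive} exponent, which therefore blows up polynomially in $N$. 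Your remark that one uses $c_N\to\infty$ ``to kill the positive power $c_N^{2\alpha-p-2\eta}$'' has the sign backwards: $c_N\to\infty$ makes that factor diverge. The only decaying factor available is the prefactor $\delta_N/(\log_2 N)$, which by hypothesis decays slower than any power of $N$ (recall $\delta_N N^{\ep}\to\infty$), so it cannot compensate, and your bound on the second term tends to infinity. No amount of bookkeeping with $\eta$ fixes this; the cutoff itself is wrong.

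The paper avoids this by writing $\E_n[(\theta_{n+1}-\theta_n)^p]\le x+\int_x^\infty \P((\theta_{n+1}-\theta_n)^p>y)\,\dd y$ and choosing the cutoff (in the $y=x^p$ scale) as $x=(\delta_N/\log_2 N)^{p/(2\alpha+\ep)}$, i.e.\ exactly at the level where the two contributions balance: after Potter's bounds the tail term is at most $\frac{4p}{2\alpha-p}\,\delta_N'\,x^{1-(2\alpha+\ep)/p}=\frac{4p}{2\alpha-p}\,x$ with $\delta_N'=\delta_N/\log_2 N$, which yields precisely the constant $1+\frac{4p}{2\alpha-p}$ and the exponent $\frac{p}{2\alpha+\ep}$. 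One then checks a posteriori that this much larger cutoff still exceeds the threshold $m_N^{(1\vee\alpha^{-1})+\ep}/c_N$ of Proposition~\ref{prop:theta} for large $N$, which holds because $(\delta_N')^{1/(2\alpha+\ep)}$ decays sub-polynomially while the threshold decays polynomially. Your computation of the first (trivial) term and your verification that it is negligible are fine; it is only the placement of the split that needs to change.
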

\begin{proof} Write $\E_n = \E[\cdot\,|\,\F_{n-m_N}]$ and set $\delta_N' = \delta_N/\log_2N$. By Proposition~\ref{prop:theta}, we have for every $x>\gamma m_N/c_N$ and every $n\ge0$:
\begin{equation*}
\E_n[(\theta_{n+1}-\theta_n)^p]\le x+\frac {Nm_N2^{m_N+1}}{h(c_N)^2}\int_x^\infty \frac{h(c_N)^2}{h(c_Ny^{1/p})^2}\,\dd y.
\end{equation*}
By definition, we have $2^{m_N} = N\delta_N$ and $h(c_N) \sim 2N\log_2 N$ as $N\to\infty$. By Potter's bounds \cite[Theorem 1.5.6]{Bingham1987}, there now exists $x_\ep$, such that for $c_N\ge x_\ep$ and $x^{1/p}c_N\ge x_\ep$, we have 
\begin{equation}
\label{eq:theta}
\E_n[(\theta_{n+1}-\theta_n)^p]\le x+\delta'_N I_x,\quad I_x=\int_x^\infty y^{-2\alpha/p}\max(y^{\ep/p},y^{-\ep/p})\,\dd y.
\end{equation}
By the hypothesis on $\ep$, we have for $x\le 1$: $I_x \le 4(p/(2\alpha-p))x^{1-(2\alpha+\ep)/p}$.
Setting now $x=x_N = (\delta'_N)^{p/(2\alpha+\ep)}$ in \eqref{eq:theta} yields the lemma (note that $x_N^{1/p}c_N \ge x_\ep$ and $x_N>m_N^{(1\vee \alpha^{-1})+\ep}/c_N$ for large $N$, by the hypothesis on $\delta_N$).
\end{proof}

\section{Coupling the discretised stairs process and the stairs process}
\label{sec:dsp_coupling}

Let $N,\ell\in\N$ and define the measure $\mu_{N,\ell}$ on $\R_+$ by \[\mu_{N,\ell}([x,\infty)) = -2N\ell\log(1-h(c_Nx)^{-1}).\] Let $\Rc^{\mu_{N,\ell}}(t)$ be the $\mu_{N,\ell}$-stairs process as defined in the introduction. Furthermore, let $R^{N,\ell}(t)$ be the $(N,\ell)$-DSP defined in \eqref{eq:dsp}. 
\begin{proposition}
\label{prop:dsp_stairs}
 We have $(\Rc^{\mu_{N,\ell}}(n/\ell))_{n\ge0} \stackrel{\text{st}}{\ge} (R^{N,\ell}(n))_{n\ge0}  \stackrel{\text{st}}{\ge}(\Rc^{\mu_{N,\ell}}(n/(\ell+1)))_{n\ge0},$ where $(X(n))_{n\ge 0} \stackrel{\text{st}}{\ge} (Y(n))_{n\ge 0}$ means that there exists a coupling, such that $X(n)\ge Y(n)$ for all $n\ge 0$.
\end{proposition}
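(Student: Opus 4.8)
The plan is to establish both stochastic inequalities by explicit couplings built on the same underlying Poisson structure, exploiting the elementary observation that the $(N,\ell)$-DSP is itself a ``stairs-type'' recursion with delay $\ell$, and that its one-step maximum over $2N$ i.i.d.\ copies of $Y$ can be compared with the maximum of a Poisson point process with intensity $\mu_{N,\ell}$. The key identity is that if $Y_{n,1},\ldots,Y_{n,2N}$ are i.i.d.\ with $\P(Y>x)=h(c_Nx)^{-1}$, then $\max_i Y_{n,i}$ has the same law as the maximum of the atoms, restricted to a unit time-interval, of a Poisson point process on $\R_+$ with intensity $-2N\log(1-h(c_Nx)^{-1})$ (since $\P(\max_i Y_{n,i}\le x) = (1-h(c_Nx)^{-1})^{2N} = \exp(2N\log(1-h(c_Nx)^{-1}))$, which is precisely the void probability of that Poisson process on $[x,\infty)$). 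Over $\ell$ consecutive unit time-intervals this maximum of $2N\ell$ variables matches the maximum of the Poisson point process with intensity $\mu_{N,\ell}$ over a unit interval; over a single unit interval it matches a Poisson process with intensity $\mu_{N,\ell}/\ell$. This is the bridge between the two discretisations.

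For the upper bound $\Rc^{\mu_{N,\ell}}(n/\ell)\ge R^{N,\ell}(n)$, I would rescale time in the stairs process by the factor $\ell$: let $\widetilde\Rc(n) = \Rc^{\mu_{N,\ell}}(n/\ell)$, a process indexed by integers $n$. From the representation \eqref{eq:Rcrep} (or directly from \eqref{eq:def_stairs}), $\widetilde\Rc(n)$ dominates the recursion $\widetilde\Rc(n+1)\ge \widetilde\Rc(n-(\ell-1))+\widetilde\xi_{n+1}$, where $\widetilde\xi_{n+1}$ is the maximum of the Poisson atoms in the time-window $((n-\ell+1)/\ell,(n+1)/\ell]$ — a window of length $1$, hence $\widetilde\xi_{n+1}$ has, under the coupling above, the same law as $\max_{i\le 2N\ell} Y$. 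Couple $\widetilde\xi_{n+1}$ with $\max_{i\in[2N]}Y_{n,i}$ so that $\widetilde\xi_{n+1}\ge \max_{i\in[2N]}Y_{n,i}$. Then an easy induction, comparing \eqref{eq:dsp} (which has delay $\ell$, i.e.\ uses $R^{N,\ell}(n-\ell)$) with the delay-$(\ell-1)$ lower bound on $\widetilde\Rc$, together with the monotonicity $R^{N,\ell}(n-\ell)\le R^{N,\ell}(n-\ell+1)$, gives $\widetilde\Rc(n)\ge R^{N,\ell}(n)$ for all $n\ge0$; the base case holds since both vanish for $n\le 0$.

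For the lower bound $R^{N,\ell}(n)\ge \Rc^{\mu_{N,\ell}}(n/(\ell+1))$, I would instead set $\widehat\Rc(n)=\Rc^{\mu_{N,\ell}}(n/(\ell+1))$ and note, from \eqref{eq:def_stairs}, that $\widehat\Rc(n+1)$ is the maximum over a window of length exactly $1$ (in original time, i.e.\ $(\ell+1)$ integer steps) of $\widehat\Rc(n-s) + \widehat\xi$; the delay here is $\ell$ integer steps at worst, since the supremum in $\Rc(t)=\max_{s\in[0,1]}(\Rc(t-s-1)+\xi_{t-s})$ translates, after discretising by $(\ell+1)$, to using values $\widehat\Rc(n-\ell),\ldots,\widehat\Rc(n)$. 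More precisely, the atoms contributing to $\widehat\Rc(n+1)$ lie in a time-window that, measured in the rescaled integer clock, has length $\ell+1$, so that $\widehat\Rc(n+1) = \max_{0\le s\le \ell}(\widehat\Rc(n-s-?)+\widehat\xi_{\ldots})$; one checks the delay is at least $\ell$, so $\widehat\Rc(n+1)\le \widehat\Rc(n)\vee\max(\widehat\Rc(n-\ell)+\text{(atoms in a unit window)})$, and the atoms in a unit window have the law of $\max_{i\le 2N}Y$. Coupling so that these atoms are $\le \max_{i\in[2N]}Y_{n,i}$ and inducting against \eqref{eq:dsp} gives $R^{N,\ell}(n)\ge\widehat\Rc(n)$.

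The main obstacle — and the step requiring genuine care rather than routine bookkeeping — is getting the delay parameters and the time-window lengths to line up exactly: the stairs process has a continuous delay of length $1$ and a continuous look-back interval of length $1$, and when one discretises time by $\ell$ or by $\ell+1$ these become integer delays that must be compared with the hard delay $\ell$ in \eqref{eq:dsp}. The choice of $\ell$ for the upper bound and $\ell+1$ for the lower bound is dictated precisely by the fact that a contribution to $\Rc(t)$ comes from $\xi_{t-s}$ with $s\in[0,1]$ shifted by $\Rc(t-s-1)$, so the effective integer ``reach back'' after rescaling by $\ell$ is between $\ell$ and $2\ell$ — hence one rescales by $\ell$ to make the delay at most $\ell$ (giving the upper bound) and by $\ell+1$ to make a unit-length window fit while keeping the delay at least $\ell$ (giving the lower bound). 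I would make this rigorous by writing \eqref{eq:Rcrep} explicitly for the rescaled process and identifying the i.i.d.\ per-step input measures, then the two inductions are short.
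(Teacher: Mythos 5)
Your starting point --- Serfling's observation that the maximum of the atoms of the Poisson process with intensity $\dd t\otimes\mu_{N,\ell}$ over a time window of length $1/\ell$ has exactly the law of $\max_{i\in[2N]}Y_{n,i}$ --- is the right one, and your lower-bound sketch is essentially recoverable once the windows are pinned down (the new atoms contributing to one step of the $(\ell+1)$-rescaled clock lie in a window of length $1/(\ell+1)$, whose maximum is stochastically dominated by, not equal in law to, $\max_{i\in[2N]}Y_{n,i}$; that is the direction you need). The upper bound, however, rests on a false inequality. You claim $\widetilde{\Rc}(n+1)\ge\widetilde{\Rc}(n-\ell+1)+\widetilde\xi_{n+1}$, i.e.\ in continuous time $\Rc(t)\ge\Rc(t-1)+\max\{\xi_u:u\in(t-1,t]\}$. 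This fails: by \eqref{eq:Rcrep}, to append an atom at time $u$ to an optimal chain realising $\Rc(t-1)$ you need that chain to end at some $t_k\le u-1$, and the atom maximising $\xi$ over $(t-1,t]$ need not satisfy this. Concretely, with exactly two atoms $(0.1,5)$ and $(0.9,10)$ one has $\Rc(0.5)=5$ and $\Rc(1.5)=10$, whereas your inequality would give $\Rc(1.5)\ge 15$. The only valid inequality of this shape for a unit-length window is $\Rc(t)\ge\Rc(t-2)+\max\{\xi_u:u\in(t-1,t]\}$, whose delay ($2\ell$ rescaled steps) is too long to be matched against the delay $\ell$ in \eqref{eq:dsp}, so the induction cannot be repaired within your set-up.

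The fix is to decompose one rescaled step using a window of length $1/\ell$ rather than $1$: every atom $u\in(n/\ell,(n+1)/\ell]$ satisfies $u-1\ge (n-\ell)/\ell$, hence $\Rc(u-1)\ge\Rc(\tfrac{n-\ell}{\ell})$ and therefore
\begin{equation*}
\Rc\left(\tfrac{n+1}{\ell}\right)\ \ge\ \Rc\left(\tfrac{n}{\ell}\right)\vee\left(\Rc\left(\tfrac{n-\ell}{\ell}\right)+\max\{\xi_u:u\in(n/\ell,(n+1)/\ell]\}\right),
\end{equation*}
where the window maximum has exactly the law of $\max_{i\in[2N]}Y_{n,i}$ and the delay is exactly $\ell$ rescaled steps, matching \eqref{eq:dsp} term by term; this is what the paper's proof does. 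Working with these disjoint short windows also removes a secondary defect of your coupling: your $\widetilde\xi_{n+1}$ are maxima over overlapping unit windows and hence dependent across $n$, so coupling all of them simultaneously above the independent variables $\max_{i\in[2N]}Y_{n,i}$ would itself require justification.
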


\begin{proof}
Let $\xi_t$ be the function used to define the process $\Rc^{\mu_{N,\ell}}(t)$. By the definition of the measure $\mu_{N,\ell}$, the variables $\max\{\xi_t:t\in[0,1/\ell)\}$ and $\max\{Y_{1,i}:i\in[2N]\}$ have the same distribution, a fact which is also known as Serfling's coupling after \cite{Serfling1978} (see also \cite{Pfeifer1985}). 
We can therefore construct the process \(R^{N,\ell}(n)\) using $\xi_t$ by
\begin{equation}
\label{eq:Rn}
R^{N,\ell}(n+1) = R^{N,\ell}(n) \vee \max_{t\in[0,1/\ell)}(R^{N,\ell}(n-\ell) + \xi_{(n+1)/\ell-t}).
\end{equation}
In comparison, the processes $\Rc^{\mu_{N,\ell}}(t)$ and $\Rc^{\mu_{N,\ell}}(t)$ satisfy by definition
\begin{align}
\label{eq:RNt1}
\Rc^{\mu_{N,\ell}}(\tfrac {n+1} \ell) &= \Rc^{\mu_{N,\ell}}(\tfrac n \ell) \vee \max_{t\in[0,1/\ell)}(\Rc^{\mu_{N,\ell}}(\tfrac{n+1-\ell} \ell -t) + \xi_{(n+1)/\ell-t}).\\
\label{eq:RNt2}
\Rc^{\mu_{N,\ell}}(\tfrac{n+1}{\ell+1}) &= \Rc^{\mu_{N,\ell}}(\tfrac n {\ell+1}) \vee \max_{t\in[0,1/(\ell+1))}(\Rc^{\mu_{N,\ell}}(\tfrac {n-\ell}{\ell+1}-t) + \xi_{(n+1)/(\ell+1)-t}).
\end{align}
Equations~\eqref{eq:Rn} and \eqref{eq:RNt1} and the monotonicity of $\Rc^{\mu_{N,\ell}}(t)$ now directly yield the first inequality in the statement of the proposition. As for the second inequality, if we take \eqref{eq:RNt2} as the definition of the process $(\Rc^{\mu_{N,\ell}}(\tfrac n {\ell+1}))_{n\ge0}$, then exchanging $\xi_{(n+1)/(\ell+1)-t}$ by $\xi_{(n+1)/\ell)-t}$ does not change its law, and we obviously have $\max_{t\in[0,1/(\ell+1))}\xi_{(n+1)/\ell)-t} \le \max_{t\in[0,1/\ell)}\xi_{(n+1)/\ell)-t}$ for every $n$. Together with \eqref{eq:Rn} and the monotonicity of $\Rc^{\mu_{N,\ell}}(t)$, this yields the statement.
\end{proof}

\section{Proof of Theorems~\ref{th:convergence} and \ref{th:main}}
\label{sec:proofs}

The following lemma will be needed in the proof of Theorem~\ref{th:convergence}.

\begin{lemma}
\label{lem:upper_min}
We have for every $n_0\ge0$ and $\ep>0$,
\[
\P(\Y_1(n_0 + \lfloor (1-\ep)\log_2 N\rfloor) < \Y_N(n_0)+\ep\,|\,\F_{n_0}) \to 1,\quad\tas N\to\infty.
\]
\end{lemma}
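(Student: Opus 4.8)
The plan is to pass to the unscaled picture and prove that, with $m=\lfloor(1-\ep)\logtwo N\rfloor$,
\[
\P\big(\X_1(n_0+m)\ge\X_N(n_0)+\ep c_N\,\big|\,\F_{n_0}\big)\lto 0\quad\tas N\to\infty,
\]
which is exactly the complement of the asserted convergence. Since $\Y_1(\cdot)$ is non-decreasing (each particle of the $N$-BRW at a given step dominates one of the particles at the previous step) and the event above grows with $\ep$, we may and do assume $\ep<1/2$, so that $(1/2)\logtwo N\le m\le 2\logtwo N$ for $N$ large. We argue conditionally on $\F_{n_0}$: then $\X(n_0)$ is a fixed configuration all of whose coordinates are $\le\X_N(n_0)$, and the displacements $(X_{n_0+k,i})_{k\ge0,\,i}$ are i.i.d.\ copies of $X$ independent of $\F_{n_0}$. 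I would first couple the $N$-BRW run from time $n_0$ with the branching random walk \emph{without} selection started from the same configuration $\X(n_0)$, in the obvious way, so that at every time $n\ge n_0$ the set of positions of the $N$-BRW is a subset of the set of positions of the unselected BRW; this is immediate by induction, since the $N$-BRW retains $N$ of the children of its current particles and these children form a subset of the children of the unselected population.

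The one nontrivial step is a deterministic observation: on the event $\{\X_1(n_0+m)\ge\X_N(n_0)+\ep c_N\}$ all $N$ particles of the $N$-BRW at time $n_0+m$ lie above $\X_N(n_0)+\ep c_N$, and since they are $N$ distinct particles of the unselected BRW at that time, at least $N$ of the $N2^m$ particles of the unselected BRW lie above $\X_N(n_0)+\ep c_N$; as the position of any such particle is the position $\le\X_N(n_0)$ of its time-$n_0$ ancestor plus the sum of the $m$ displacements along its lineage, each of these $\ge N$ lineages has displacement sum $\ge\ep c_N$. Writing $Z$ for the number of the $N2^m$ lineages (issued at time $n_0$) of the unselected BRW with displacement sum $\ge\ep c_N$, we have shown $\{\X_1(n_0+m)\ge\X_N(n_0)+\ep c_N\}\subseteq\{Z\ge N\}$. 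Each displacement sum has the law of $S_m:=X_1+\cdots+X_m$ (i.i.d.\ copies of $X$), so by linearity of expectation $\E[Z]=N2^m\,\P(S_m\ge\ep c_N)$, whence Markov's inequality gives
\[
\P\big(\X_1(n_0+m)\ge\X_N(n_0)+\ep c_N\,\big|\,\F_{n_0}\big)\le\P(Z\ge N)\le 2^m\,\P(S_m\ge\ep c_N).
\]

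It then remains to bound this, which is a routine large-deviation computation. Because $c_N=h^{-1}(2N\logtwo N)$ grows polynomially in $N$ while $m\le\logtwo N$, we have $\ep c_N\ge m^{(1\vee\alpha^{-1})+1}$ for $N$ large, so — exactly as in the proof of Proposition~\ref{prop:theta}, using \cite[Theorem~3.3]{Cline1998} — $\P(S_m\ge\ep c_N)\le 2m\,\P(X\ge\ep c_N)=2m/h(\ep c_N)$ for $N$ large. Using $2^m\le N^{1-\ep}$ and, by regular variation, $h(\ep c_N)\sim\ep^\alpha h(c_N)\sim 2\ep^\alpha N\logtwo N$, the right-hand side above is at most $2m\,N^{1-\ep}/h(\ep c_N)\le\ep^{-\alpha}N^{-\ep}(1+o(1))\to0$, proving the lemma. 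The main obstacle — and the point that makes the argument work — is the deterministic observation above: it converts ``the trailing particle rose by $\ep c_N$'' into ``at least $N$ unselected lineages made a displacement of order $\ep c_N$'', so that a first-moment bound on a \emph{count} (rather than a union bound on a maximum) is available, and this is what supplies the decisive factor $N^{-\ep}$; bounding the maximum of the unselected population would be hopelessly lossy since after $(1-\ep)\logtwo N$ steps it already carries $N^{2-\ep}$ particles.
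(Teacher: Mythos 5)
Your proof is correct, and it shares the paper's essential skeleton: reduce to starting all particles at $\X_N(n_0)$, observe that the event $\{\X_1(n_0+m)\ge \X_N(n_0)+\ep c_N\}$ forces \emph{all} $N$ particles — hence at least $N$ particles of the dominating unselected BRW — to exceed the threshold, and then beat this count of $N$ by a first-moment bound plus Markov, which is exactly where the decisive factor $N^{-\ep}$ comes from (your closing remark about why a union bound on the maximum would be hopeless is the right diagnosis). Where you diverge is in how the expected count is estimated. The paper avoids any large-deviation input: it sets $\gamma_N=\ep/\log_2 N$ (in the rescaled picture), notes that a particle above $n\gamma_N$ at time $n$ must have an ancestor that made a single jump of size at least $\gamma_N$, and bounds the count by $\sum_k 2^{n-k}J_k$ where $J_k$ counts such jumps at step $k$; only the one-jump tail $1/h(c_N\gamma_N)$ and Potter's bounds are needed. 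You instead compute $\E[Z]=N2^m\P(S_m\ge\ep c_N)$ exactly and control $\P(S_m\ge\ep c_N)$ via the Cline--Hsing bound $\P(S_m>x)\le 2m\P(X>x)$ already used in Proposition~\ref{prop:theta} (and you correctly check its applicability: $\ep c_N$ grows polynomially in $N$ while the required threshold $m^{(1\vee\alpha^{-1})+1}$ is only polylogarithmic). Your route is slightly less self-contained but arguably cleaner, since it reuses machinery the paper needs anyway; the paper's route is more elementary and is, in effect, a hands-on proof of a weak form of the same large-deviation estimate via the ``one big jump'' principle. Both yield a polynomially small bound ($N^{-\ep}$ up to constants for you, $2N^{-\ep/2}$ for the paper), either of which suffices.
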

\begin{proof}
Since we can bound the configuration of particles at time $n_0$ from above by moving all particles to the position of the maximum $\Y_N(n_0)$, it is clearly enough to show the lemma for $n_0=0$.
Let $\ep>0$ and set $\gamma_N=\ep/\log_2 N$.
Denote by $J_n$ the number of particles which jump by at least $\gamma_N$ between times $n$ and $n+1$. Then 
\(\E[J_n] = 2N\P(Y > \gamma_N) = 2Nh(c_N\gamma_N)\), such that $\E[J_n] \le N^{\ep/2}$ for large $N$, by Potter's bounds \cite[Theorem 1.5.6]{Bingham1987}). Now, if a particle is at a position strictly greater than $n\gamma_N$ at a time $n$, it must have an ancestor which has jumped by more than $\gamma_N$ between times $k-1$ and $k$ for some $k\le n$. This ancestor then has at most $2^{n-k}$ descendants at time $n$. Altogether, this gives for large $N$,
\[
\E[\#\{i: \Y_i(n) > n\gamma_N\}] \le \sum_{k=1}^n 2^{n-k}\E[J_k] \le 2^{n+1} N^{\ep/2},
\]
which now implies with $n_N=\lfloor (1-\ep)\log_2 N\rfloor$, for large $N$,
\[
\P(\Y_1(n_N) \ge \ep) \le \P(\#\{i: \Y_i(n_N) > n_N\gamma_N\} \ge N) \le 2N^{-\ep/2},
\]
by Markov's inequality. This yields the lemma.
\end{proof}

\begin{proof}[Proof of Theorem~\ref{th:convergence}]
Set  $(\Y_N'(t))_{t\ge0} = (\Y_N(\lfloor t\log_2 N\rfloor))_{t\ge0}$. We will first show that the finite-dimensional distributions of $(\Y_N'(t),\Y_1'(t))_{t\ge0}$  converge to those of  $(\Rc^\alpha(t),\Rc^\alpha(t-1))_{t\ge0}$. Recall the definitions of $\ell_N$ and $m_N$ from Sections~\ref{sec:lower} and \ref{sec:upper} and note that $\ell_N\sim m_N\sim \log_2 N$ as $N\to\infty$. For an upper bound, let $p=1$ if $\alpha \ge 1$ and $p\in(\alpha,\min(1,2\alpha))$ if $\alpha < 1$. Write $x_+ = \max(x,0)$ for $x\in\R$. We then have by Corollary~\ref{cor:upper}, for every $\ep>0$ and some $c>0,$  for large $N$,
\begin{equation}
\label{eq:551}
\forall n\ge0: \E[(\Y_N(n)-R^{N,m_N}(n))_+^p] \le \E[\theta_n^p] \le \sum_{k=1}^n\E[(\theta_k-\theta_{k-1})^p] \le c n \left(\frac{\delta_N}{\log_2 N}\right)^{p/(2\alpha+\ep)}.
\end{equation}
Fix $K>0$. If we choose $\delta_N = o((\log_2 N)^{1-(2\alpha+\ep)/p})$, then for all $n \le K\log_2 N$, the right-hand side in the last inequality tends to zero as $N\to\infty$. Together with Propositions~\ref{prop:stairs_convergence} and \ref{prop:dsp_stairs} as well as Lemmas~\ref{lem:stoch_continuity} and~\ref{lem:upper_min}, this shows that the finite-dimensional distributions of $(\Y_N'(t),\Y_1'(t))$ are tight in $N$ and are in the limit as $N\to\infty$ dominated by those of $(\Rc^\alpha(t),\Rc^\alpha(t-1))$. 

For a lower bound, note that by Propositions~\ref{prop:lower} and~\ref{prop:dsp_stairs}, we have for every $n\ge0$,
\begin{equation}
\label{eq:550}
(\Y_N(n),\Y_1(n)) \stackrel{\text{st}} \ge (R^{N,\ell_N}(n),R^{N,\ell_N}(n-\ell_N)) \stackrel{\text{st}} \ge  (\Rc^{\mu_{N,\ell_N}}(\tfrac {n} {\ell_N+1}),\Rc^{\mu_{N,\ell_N}}(\tfrac {n-\ell_N} {\ell_N+1})),
\end{equation}
with the coordinate-wise order on $\R^2$ (i.e.\ $(x,y)\le (v,w)$ iff $x\le v$ and $y\le w$).
Together with the first part of Proposition~\ref{prop:stairs_convergence} and Lemma~\ref{lem:stoch_continuity}, this proves that as $N\to\infty$, every limit point of the finite-dimensional distributions of $(\Y_N'(t),\Y_1'(t))_{t\ge0}$ dominates those of $(\Rc^\alpha(t),\Rc^\alpha(t-1))_{t\ge0}$. Together with the upper bound established above, this proves the convergence.

In order to prove tightness of $(\Y_N'(t))_{t\ge0}$ in Skorokhod's $J_1$-topology, we will use Aldous' criterion \cite[Theorem~1]{Aldous1978}: Let $T_N$ be a sequence of stopping times for $\Y_N'$. Suppose for simplicity that $T_N$ only takes on values which are multiples of $(\log_2N)^{-1}$. Let $\ep_N$ be a sequence of positive numbers converging to 0. We then have for every $x>0$,
\[
\P(\Y_N'(T_N+\ep_N)-\Y_N'(T_N) > x) \le \P(\Y_N'(\ep_N) > x),
\]
because we can bound the configuration of particles at time $T_N\log_2N$ from above by moving all particles to the position of the maximum. The right-hand side of the last inequality now converges to 0 by the convergence in finite-dimensional distributions established above together with the monotonicity of $\Y_N'(t)$ and  Lemma~\ref{lem:stoch_continuity}. By Aldous' criterion, this yields tightness in Skorokhod's $J_1$-topology.

As for the convergence of $(\Y_N'(t),\Y_1'(t))_{t\ge0}$ in the $SM_1$-topology, we note that by Skorokhod's representation theorem for stochastic processes \cite[{\S}3.1.2]{Skorokhod1956} and the convergence of the finite-dimensional distributions established above, we can transfer the processes $\Y_N'$, $\Y_1'$ and $\Rc^\alpha$ onto a common probability space, such that almost surely, $(\Y_N'(t),\Y_1'(t))\to (\Rc^\alpha(t),\Rc^\alpha(t-1))$ for every $t\in\Q_+$. The monotonicity of $\Y_N'$ and $\Y_1'$ then implies that almost surely, both $\Y_N'$ and $\Y_1'$ converge w.r.t.\ the $SM_1$-topology \cite[Corollary 12.5.1]{Whitt2002}. Convergence of the pair now follows from the second part of Lemma~\ref{lem:stoch_continuity}, by \cite[Theorem 12.6.1]{Whitt2002}.
\end{proof}

\begin{proof}[Proof of Theorem~\ref{th:main}]
We first cover the case $\E[X] < \infty$ (which includes the case $\alpha > 1$). The existence of the limit  $v_N = \lim_{N\to\infty} \X_N(n)/n = \lim_{N\to\infty} \X_1(n)/n$ is easily proven using subadditivity (see  \cite[Proposition~2]{Berard2010}) with the convergence holding almost surely and in $L^1$. The asymptotic for $v_N$ now easily follows from \eqref{eq:550} and \eqref{eq:551}, together with Theorem~\ref{th:stairs} and Proposition~\ref{prop:dsp_stairs}. Indeed, \eqref{eq:550} immediately gives a lower bound on $v_N$ and for the upper bound, we note that with  $\delta_N = o(\log N)^{1-2\alpha-\ep}$, the second term on the right-hand side of \ref{eq:551}, multiplied by $(\log_2N)/n$, vanishes in the limit as $N$ goes to infinity.

In the case $\E[X]=\infty$, set $\beta_n = nb^N_n$ if $\alpha=1$ and $\beta_n = h^{-1}(n)$ if $\alpha < 1$ and let $p=1$ if $\alpha = 1$ and $p\in(\alpha,\min(1,2\alpha))$ if $\alpha < 1$. Then $n/\beta_n^p\to 0$ as $n\to\infty$, by Potter's bounds \cite[Theorem 1.5.6]{Bingham1987}) and the fact that $h^{-1}(n)$ is regularly varying with index $1/\alpha$ \cite[Theorem~1.5.12]{Bingham1987}. Letting $\delta_N$ be any sequence satisfying the hypotheses of Corollary~\ref{cor:upper}, we get for every $N$ and every $\ep > 0$, for some constant $C_N$, by \eqref{eq:551},
\[
\P(\Y_N(n)-R^{N,m_N}(n) > \ep \beta_n) \le \ep^{-p}\beta_n^{-p}\E[(\Y_N(n)-R^{N,m_N}(n))_+^p] \le C_N\ep^{-p}n/\beta_n^{p}\to 0,
\]
as $n\to\infty$. This, together with Theorem~\ref{th:stairs} and Propositions~\ref{prop:lower} and \ref{prop:dsp_stairs}, implies the statement about $\X_N(n)$. The statement about $\X_1(n)$ follows from the fact that $\X_N(n-\lceil\log_2 N\rceil) \le \X_1(n) \le \X_N(n)$ for all $n$.
\end{proof}

\bibliography{n-brw-polynomial}
\end{document}